\def\BibTeX{{\rm B\kern-.05em{\sc i\kern-.025em b}\kern-.08em
    T\kern-.1667em\lower.7ex\hbox{E}\kern-.125emX}}
\begin{document}
\title{Automatic Performance Estimation for Decentralized Optimization}
\author{Sébastien Colla and Julien M. Hendrickx
\thanks{S. Colla and J. M. Hendrickx are  with  the  ICTEAM institute, UCLouvain, Louvain-la-Neuve, Belgium. S. Colla is supported by the French Community of Belgium through a FRIA fellowship (F.R.S.-FNRS). J. M. Hendrickx is supported by the “RevealFlight” Concerted Research Action (ARC) of the Federation Wallonie-Bruxelles and by the Incentive Grant for Scientific Research (MIS) “Learning from Pairwise Comparisons” of the F.R.S.-FNRS. Email addresses: {\tt\small sebastien.colla@uclouvain.be}, {\tt\small julien.hendrickx@uclouvain.be}}}

\maketitle

\begin{abstract}
We present a methodology to automatically compute worst-case performance bounds for a large class of first-order decentralized optimization algorithms. These algorithms aim at minimizing the average of local functions that are distributed across a network of agents. They typically combine local computations and consensus steps.
Our methodology is based on the approach of Performance Estimation Problem (PEP), which allows computing the worst-case performance and a worst-case instance of first-order optimization algorithms by solving an SDP.
We propose two ways of representing consensus steps in PEPs, which allow writing and solving PEPs for decentralized optimization.
The first formulation is exact but specific to a given averaging matrix.
The second formulation is a relaxation but provides guarantees valid over an entire class of averaging matrices, characterized by their spectral range. This formulation often allows recovering a posteriori the worst possible averaging matrix for the given algorithm.
We apply our methodology to three different decentralized methods. For each of them, we obtain numerically tight worst-case performance bounds that significantly improve on the existing ones, as well as insights about the parameters tuning and the worst communication networks.
\end{abstract}

\begin{IEEEkeywords} Consensus, Distributed optimization, Rates of convergence, Worst-case analysis, Performance estimation problem.

\end{IEEEkeywords}

\section{Introduction}
\label{sec:introduction}
\IEEEPARstart{T}{he} goal of this paper is to develop a methodology that automatically provides numerically tight performance bounds for first-order decentralized methods on convex functions and to demonstrate its usefulness on different existing methods.
Decentralized optimization has received an increasing attention due to its useful applications in large-scale machine learning and sensor networks, see e.g. \cite{DGD} for a survey. In decentralized methods for separable objective functions, we consider a set of agents $\{1,\dots,N\}$, working together to solve the following optimization problem: \vspace{-2mm}
\begin{equation} \label{opt:dec_prob}
  \underset{\text{\normalsize $x \in \Rvec{d}$}}{\mathrm{minimize}} \quad f(x) = \frac{1}{N}\sum_{i=1}^N f_i(x), \vspace{-1mm}
\end{equation}
where $f_i: \Rvec{d}\to\mathbb{R}$ is the private function locally held by agent $i$.
To achieve this goal, each agent $i$ holds its own version $x_i$ of the decision variable $x \in \Rvec{d}$. Agents perform local computations and exchange local information with their neighbors to come to an agreement on the minimizer $x^*$ of the global function $f$. Exchanges of information often take the form of an average consensus step on some quantity, e.g., on the $x_i$. This corresponds to a multiplication by an averaging matrix $W \in \Rmat{N}{N}$, typically assumed symmetric and \emph{doubly stochastic}, i.e., a nonnegative matrix whose rows and columns sum to one. This matrix $W$ indicates both the topology of the network of agents and the weights they are using during the average consensus. Therefore, we call it network or averaging matrix, without distinction.

One of the simplest decentralized optimization method is the distributed (sub)gradient descent (DGD) \cite{DsubGD} where agents successively perform an average consensus step \eqref{eq:DGD_cons} and a local gradient step \eqref{eq:DGD_comp}:
\vspace*{-4mm}
\begin{align}
  y_i^k &= \sum_{j=1}^N w_{ij} x_j^k,          \hspace*{15mm} \label{eq:DGD_cons} \\[-0.5mm]
  x_i^{k+1} &= y_i^k - \alpha^k \nabla f_i(x_i^k), \hspace*{15mm} \label{eq:DGD_comp} \vspace{-2mm}
\end{align}
for step-sizes $\alpha^k > 0$.
Numerous other methods rely on the interplay of gradient and average consensus steps,
such as EXTRA \cite{EXTRA}, DIGing \cite{DIGing}, and NIDS \cite{NIDS}. The convergence results of DIGing holds for time-varying network matrices and the algorithm can also be extended to handle directed communications \cite{DIGing}. Different other algorithms can handle directed communications \cite{directed,directed2}.
There also are accelerated decentralized methods, such as the accelerated distributed Nesterov gradient descent (Acc-DNGD) \cite{AccDNGD}. \\
Furthermore, average consensus is used in different distributed primal dual methods such as DDA \cite{DDA}, MSDA \cite{MSDA} or MSPD \cite{MSPD}, though not directly in all, e.g., D-ADMM \cite{ADMM_1,ADMM}.

We note in particular recent results have reached optimal performance for certain specific performance criteria and classes of functions \cite{optimal_gradient_tracking}, \cite{MSPD}.  However, many questions and challenges remain open in the design and analysis of decentralized optimization method.
For further information on the topic, we refer the reader to the introduction of this recent thesis \cite{H_Hendrikx_thesis} in the field.

The quality of an optimization method is often evaluated \textit{via} a worst-case guarantee. Having accurate performance bounds for a decentralized algorithm is important to correctly understand the impact of its parameters and the network topology on its performance and to compare it fairly with other algorithms. However, obtaining these guarantees using theoretical proofs can often be a challenging task, requiring combining the impact of the optimization component and the interconnection network, sometimes resulting in bounds that are conservative or overly complex.
For example, we will show in Section \ref{sec:NumRes} that the existing performance bounds for DGD or DIGing are significantly worse than the actual worst-cases.

\subsection{Contributions}
We propose a new analysis methodology allowing to compute numerically tight worst-case performance bounds of decentralized optimization methods. This methodology is based on an alternative computational approach that finds a worst-case performance guarantee of an algorithm by solving an optimization problem, known as the performance estimation problem (PEP), see Section \ref{sec:PEPcen} for more details.
The PEP approach has led to many results in centralized optimization, see e.g. \cite{PEP_Smooth,PEP_composite}, but it has never been exploited in decentralized optimization.
The current PEP framework lacks ways of representing the communications between the agents. Therefore, we propose two formulations of the average consensus steps that can be embedded in a solvable PEP. Both formulations are presented in Section \ref{sec:consensusPEP}.
The first one uses a given averaging matrix $W$ and is exact. It can be used in PEP with any matrix $W$ and leads to performance bounds that are tight, but specific to the given matrix. However, these bounds are too specific to understand the general behavior of the algorithm. Our second formulation thus considers entire spectral classes of symmetric matrices, as often found in the literature.
This allows the PEP problem to obtain spectral upper bounds on the performance that are valid over an entire spectral class of network matrices and to look for the worst matrix in the given class. Although this formulation is a relaxation, we observe tight results in most cases, see Section \ref{sec:NumRes}.
This spectral formulation is our main methodological contribution.

Using these two new formulations, the PEP approach can be applied directly to a large class of first-order decentralized algorithms, in a wide range of settings and problems. We demonstrate these new formulations by analyzing the worst-case performance of DGD \cite{DGD}, DIGing \cite{DIGing} and Acc-DNGD \cite{AccDNGD} in Section \ref{sec:NumRes}. For all three algorithms, the spectral formulation leads to tight spectral performance bounds and we observe that these bounds are actually independent of the number of agents $N$ (when $N \ge 2$).
For DGD and DIGing, these spectral bounds significantly improve on the existing theoretical ones. For DIGing, we also show how to use the PEP approach to obtain linear convergence rate guarantees on an infinite horizon. For Acc-DNGD, we use our tool to analyze the impact of the parameters and nuance the conjecture from \cite{AccDNGD} about the asymptotic convergence rate $\bigO(\frac{1}{K^2})$, where $K$ is the total number of iterations.

One of the advantages of this new methodology is its great flexibility, allowing to answer many different questions, for example, by changing the class of functions or the performance criterion. In future works, this tool can easily be used to analyze the effect of inexact communications or gradient computations. This methodology could also consider performance criteria that not only account for efficiency but also robustness against errors in gradient computations or communications in order to help designing algorithms with the best efficiency-robustness trade-off.

A preliminary version of these results was published in \cite{PEP_dec}.
Our main new contributions with respect to \cite{PEP_dec} are (i) the proof of the necessary constraints used in our spectral formulation for any dimension $d$ of the variables $x_i \in \Rvec{d}$; (ii) the technique to find the worst averaging matrix based on the worst-case solutions of the spectral formulation; (iii) a wider demonstration of the methodology by analyzing DIGing and Acc-DNGD algorithms in addition of DGD.

\subsection{Related work}

An alternative approach with similar motivations for automated performance evaluation and inspired by dynamical systems concepts is proposed in \cite{IQC}.  Integral quadratic constraints (IQC), usually used to obtain stability guarantees on complex dynamical systems, are adapted to obtain sufficient conditions for the convergence of optimization algorithms. It provides infinite-horizon linear rates of convergence, based on relatively small size problems, but it only applies when the convergence is geometric. In comparison, the PEP approach computes the worst-case performance on a finite horizon, and therefore, the size of the problem grows with the limit of the horizon. However, this allows PEP to analyze non-geometric convergence and the impact of time-varying properties.
Unlike PEP, the IQC approach offers no a priori guarantee of tightness, though it turns out to be tight in certain situations.

An application of the IQC methodology to decentralized optimization is presented in \cite{IQC_dec} and is also exploited for designing a new decentralized algorithm that achieves a faster worst-case linear convergence rate in the smooth strongly convex case. This IQC formulation uses problems whose size is independent of the number of agents $N$ and it considers a fixed framework of decentralized algorithms that embeds a lot of explicit first-order methods such as DIGing \cite{DIGing}, EXTRA \cite{EXTRA} and NIDS \cite{NIDS}.
This methodology cannot be directly applied to DGD, nor to smooth convex functions or any other situation that does not have a geometric convergence.
Our PEP approach applies directly to any decentralized method (implicit or explicit) that involves consensus steps in the form of a matrix product, without the need to cast the method into a specific framework. This makes PEP very modular and user-friendly, in particular with the PESTO toolbox \cite{PESTO}.

\section{General PEP approach} \label{sec:PEPcen}
In principle, a tight performance bound on an algorithm could be obtained by running it on every single instance - function and initial condition - allowed by the setting considered and selecting the worst performance obtained. This would also directly provide an example of “worst” instance if it exists.
The performance estimation problem (PEP) formulates this abstract idea as a real optimization problem that maximizes the error measure of the algorithm result, over all possible functions and initial conditions allowed \cite{PEP_Drori}.
This optimization problem is inherently infinite-dimensional, as it contains a continuous function among its variables. Nevertheless, Taylor et al. have shown \cite{PEP_Smooth,PEP_composite} that PEP can be solved exactly using an SDP formulation, for a wide class of centralized first-order algorithms and different classes of functions.

The main ingredients of a centralized PEP are: (i) a performance measure $\mc{P}$, e.g. $f(x^k)-f(x^*)$; (ii) a class of functions $\mc{F}$, e.g. the class $\mc{F}_{\mu,L}$ of $\mu$-strongly convex and $L$-smooth functions; (iii) an optimization method $\mc{M}$ on class $\mc{F}$; (iv) a set $\mc{I}^0$ of initial conditions, e.g. $\|x^0-x^*\|^2 \le 1$.
These ingredients are organized into an optimization problem as \vspace{-1mm}
\begin{align}
  \underset{f, x^0,\dots, x^K, x^*}{\sup} \quad & \mathrm{\mc{P}}\qty(f, x^0,\dots,x^K, x^*) \hspace*{-4mm}  \label{eq:gen_PEP}\\[-0.3mm]
 \text{s.t.} \hspace{8mm}       & \hspace{-5mm}    f \in \mc{F}, \hspace{8mm}
                                 x^* \in \mathrm{argmin }~ f, \\[-0.6mm]
                                & \hspace{-5mm} x^k \text{ are iterates from method $\mc{M}$ applied on $f$,} \\[-0.6mm]
                                & \hspace{-5mm} \mc{I}^0 ~\text{ holds.} \\[-6mm]
\end{align}
To overcome the infinite dimension of variable $f$, the problem can be discretized into $\{\qty(x^k,g^k,f^k)\}_{k\in I}$, where $g^k$ and $f^k$ are respectively the (sub)gradient and the function value of $f$ at point $k$ and $I = \{0,\dots,K,*\}$. Then, the constraint $f \in \mc{F}$ is replaced by interpolation conditions ensuring that there exists a function of class $\mc{F}$ which interpolates those data points $\{\qty(x^k,g^k,f^k)\}_{k \in I}$, i.e. these values are consistent with an actual function.
Such constraints are provided for many different classes of functions in \cite[Section 3]{PEP_composite}. They are generally quadratics and potentially non-convex in the iterates and the gradients vectors, but they are linear in the scalar products of these and in the function values. The same holds true for most classical performance criteria and initial conditions.
We can then consider these scalar products directly as decision variables of the PEP. For this purpose, we define a Gram matrix $G$ that contains scalar products between all vectors, e.g. the iterates $x^k \in \Rvec{d}$ and subgradients $g^k \in \Rvec{d}$. \vspace{-0.25mm}
\begin{align}
  G = P^TP, \text{ with } P = \qty[g^0\dots g^K g^* x^0\dots x^K x^*]. \\[-5.5mm]
\end{align}
By definition, $G$ is symmetric and positive semidefinite.
Moreover, to every matrix $G \succeq 0$ corresponds a matrix $P$ whose has a number of rows equal to $\text{rank}~G$. We can show (see \cite{PEP_composite}) that the reformulation of \eqref{eq:gen_PEP} with $G$ is lossless provided that we do not impose the dimension $d$ in \eqref{eq:gen_PEP}, and indeed look for the worst-case over all possible dimensions (imposing the dimension would correspond to adding a typically less tractable rank constraint on $G$).
The idea is therefore to formulate a PEP of the form of \eqref{eq:gen_PEP} as an equivalent positive semidefinite program (SDP) using the Gram matrix $G$ and the vector of functional values $f_v = [f^k]_{k \in I}$ as variables.

This SDP formulation is convenient because it can be solved numerically to global optimality, leading to a tight worst-case bound, and it also provides the worst-case solution over all possible problem dimensions.
We refer the reader to \cite{PEP_composite} for more details about the SDP formulation of PEP, including ways of reducing the size of matrix $G$. However, the dimension of $G$ always depends on the number of iterations $K$.
From a solution $G$, $f_v$ of the SDP formulation, we can construct a solution for the discretized variables $\{\qty(x^k,g^k,f^k)\}_{k\in I}$, e.g. using the Cholesky decomposition of $G$. Since these points satisfy sufficient interpolation constraints, we can also construct a function from $\mc{F}$ interpolating these points.

Proposition \ref{prop:GramPEP} states sufficient conditions under which a PEP in the form of \eqref{eq:gen_PEP} can be formulated as an SDP. These conditions are satisfied in most PEP settings, allowing to formulate and solve the SDP PEP formulation for a large class of (implicit and explicit) first-order methods, with different classes of functions and performance criteria, see \cite{PEP_composite}. The proposition uses the following definition.
\begin{definition}[Gram-representable \cite{PEP_composite}] \label{def:Gram}
Consider a Gram matrix $G$ and a vector $f_v$, as defined above. We say that
a constraint or an objective is linearly (resp. LMI) Gram-representable if it can be expressed using a finite set of linear (resp. LMI) constraints involving (part of) $G$ and $f_v$.
\end{definition} \smallskip
\begin{proposition}[\hspace{-0.5pt}{{\cite[Proposition 2.6]{PEP_composite}}}] \label{prop:GramPEP}
  If the interpolation constraints of the class of functions $\mc{F}$, the satisfaction of the method $\mc{M}$, the performance measure $\mc{P}$ and the set of constraints $\mc{I}$, which includes the initial conditions, are linearly (or LMI) Gram-representable, then, computing the worst-case for criterion $\mc{P}$ of method $\mc{M}$ after $K$ iterations on objective functions in class $\mc{F}$ with constraints $\mc{I}$ can be formulated as an SDP, with $G \succeq 0$ and $f_v$ as variables. \\
  This remains valid when the objective function is the sum of $N$ sub-functions being each in a class of functions with linearly (or LMI) Gram representable interpolation constraints.
\end{proposition}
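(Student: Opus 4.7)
The plan is to follow the standard PEP-to-SDP reformulation pipeline outlined just before the statement, making each step explicit and checking that the Gram-representability hypothesis is exactly what is needed to land in SDP form.

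First I would start from the abstract problem \eqref{eq:gen_PEP} and discretize the functional variable: replace $f$ by the finite data $\{(x^k,g^k,f^k)\}_{k\in I}$, with $I=\{0,\dots,K,*\}$. The constraint $f\in\mc{F}$ together with $g^k\in\partial f(x^k)$ and $f(x^k)=f^k$ is then equivalent to the statement that the triples are $\mc{F}$-interpolable, i.e.\ satisfy the interpolation conditions of $\mc{F}$. By hypothesis these conditions are (linearly or LMI) Gram-representable, and so is the method-description constraint $x^k=\mc{M}(\dots)$, the performance criterion $\mc{P}$, and the initial condition set $\mc{I}^0$. At this stage the problem is a finite-dimensional maximization over the vectors $x^k,g^k\in\Rvec{d}$ and the scalars $f^k$.

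Next I would introduce the Gram matrix $G=P^TP$ with $P=[\,g^0\dots g^* \; x^0\dots x^*\,]$ and the value vector $f_v=[f^k]_{k\in I}$, and rewrite every constraint in terms of $(G,f_v)$. The key observation, which I would invoke as a lemma (it is used throughout \cite{PEP_composite}), is the lossless change of variables: a matrix $G$ is of the form $P^TP$ for some $P\in\Rmat{d'}{|I|(d_g+d_x)}$ with $d'$ arbitrary if and only if $G\succeq 0$. Hence, as long as we do not fix the ambient dimension $d$, optimizing over $(x^k,g^k)$ is equivalent to optimizing over $G\succeq 0$. Because the objective and constraints were assumed Gram-representable, after this substitution they become linear or LMI in the decision pair $(G,f_v)$, and the PSD constraint $G\succeq 0$ is kept. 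The resulting problem is therefore an SDP, which is the required conclusion.

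For the final sentence about sums of $N$ sub-functions, the argument is a direct extension: introduce one discretization $\{(x^k,g_i^k,f_i^k)\}$ per sub-function $f_i$ with its own Gram-representable interpolation constraints, and stack all the vectors $g_i^k$ (together with the iterates) into a single Gram matrix $G$. The assumptions ensure that each block of interpolation constraints is linear/LMI in $G$ and in the corresponding $f_{i,v}$, and the objective, method and initial conditions remain Gram-representable in the joint $G$ and $f_v=(f_{1,v},\dots,f_{N,v})$. The overall problem is then again an SDP.

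The step I expect to require the most care is the lossless Gram reformulation: one must check that no information is lost when passing from the vector variables to $G$, which is exactly the point where the assumption of unbounded (or free) dimension $d$ is used, and one must also argue that a feasible $(G,f_v)$ can be converted back into an actual feasible instance via a Cholesky factorization of $G$ together with the interpolation lemma producing a genuine $f\in\mc{F}$. Everything else—substituting scalar products for vector expressions and checking that the method/performance/initial-condition constraints remain linear or LMI—is routine once Gram-representability is assumed.
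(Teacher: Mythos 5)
Your proposal is correct and follows essentially the same route the paper takes: the paper does not prove this proposition itself but cites \cite[Proposition 2.6]{PEP_composite} and sketches exactly your pipeline in Section \ref{sec:PEPcen} (discretization via interpolation conditions, the Gram substitution $G=P^TP$ made lossless by leaving the dimension $d$ free, and recovery of a worst-case instance via a Cholesky factorization plus the interpolation lemma). Your treatment of the $N$ sub-function extension by stacking per-function data into one joint Gram matrix also matches what the paper does in practice in Section \ref{sec:consensusPEP}.
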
 \smallskip
\emph{Remark:}
In \cite{PEP_composite}, Definition \ref{def:Gram} and Proposition \ref{prop:GramPEP} were only formulated for linearly Gram-representable constraints, but their extension to LMI Gram-representable constraints is direct. Such constraints appear in the analysis of consensus steps with spectral classes of network matrices.

PEP techniques allowed answering several important questions in optimization, see e.g. the list in \cite{Taylor_thesis}, and to make important progress in the tuning of certain algorithms including the well-known centralized gradient-descent.
It was further exploited to design optimal first-order methods: OGM for smooth convex optimization \cite{OGM} and its extension ITEM for smooth strongly convex optimization \cite{ITEM}.

It can also be used to deduce proofs about the performance of the algorithms \cite{PEP_compo}. It has been made widely accessible \textit{via} a Matlab \cite{PESTO} and Python \cite{pepit2022} toolbox.
However, PEPs have never been used to study the performance of decentralized methods.

\section{Representing consensus steps in PEP} \label{sec:consensusPEP}
The main missing block to develop a PEP formulation for decentralized methods is to find a proper way of representing the consensus steps of the methods in the SDP PEP.
In PEP for decentralized methods, we must find the worst-case for each of the $N$ local functions $f_i$ and the $N$ sequences of local iterates $x_i^0 \dots x_i^K$ ($i=1,\dots,N$). The same techniques as in Section \ref{sec:PEPcen} can be applied to discretize the problem, using proper interpolation conditions on each local function. We also want to use a Gram matrix of scalar products to reformulate it as an SDP that can be solved efficiently. Therefore, according to Proposition \ref{prop:GramPEP}, we need to use linearly Gram representable performance criteria, classes of functions and initial conditions. Moreover, we also need to find a linearly or LMI Gram-representable way of expressing the updates of the decentralized method to be analyzed.
There is currently no representation of the consensus steps that can be embedded in the SDP formulation. Therefore, the rest of this section proposes ways to represent agent interactions in the SDP PEP formulation and thus provides the missing block for analyzing decentralized first-order optimization methods with PEP.

Agent interactions are part of any decentralized methods and often take place \textit{via} a weighted averaging, which can be described as a consensus step of the following form, similar to that used in DGD \eqref{eq:DGD_cons}:  \vspace{-1.5mm}
\begin{equation} \label{eq:consensus}
    y_i = \sum_{j=1}^N w_{ij} x_j, \qquad \text{for all $i\in\{1,\dots,N\}$,} \vspace{-1.5mm}
\end{equation}
where $x_j$ can represent any vector in $\Rvec{d}$ held by agent $j$, e.g., its local iterates in the case of DGD or something else in more advanced methods. Vector $y_i \in \Rvec{d}$ is an auxiliary variable that represents the result of the interaction and $W \in \Rmat{N}{N}$ is the averaging matrix.
This form of communication is used in many decentralized methods such as DGD \cite{DsubGD}, DIGing \cite{DIGing}, EXTRA \cite{EXTRA}, NIDS \cite{NIDS} and the results presented in this section can be exploited for all these methods.

When $K$ different consensus steps are involved in the algorithm, we observe that the Gram matrix $G$ of all scalar products contains the submatrix $G_c$:
\begin{align}
    G_c &= P_c^TP_c, \label{eq:Gc} \\[1mm]
    \small\text{  with  } P_c &= \begin{bmatrix} x_0^0 \dots x_N^0 \dots x_0^K \dots x_N^K~y_0^0 \dots y_N^0 \dots y_0^K \dots y_N^K \end{bmatrix}. \\[-8mm]
\end{align}
We will see that the new constraints we propose to represent the consensus steps are linearly or LMI Gram-representable in $G_c$, and then also linearly or LMI Gram-representable in $G$.

\subsection{Averaging matrix given a priori}
When the averaging matrix is given \emph{a priori}, the consensus step \eqref{eq:consensus} corresponds to a set of linear equality constraints on $y_i$ and $x_j$, which are equivalent to the constraints \vspace{-1mm}
$$\Big(y_i - \sum_{j=1}^N w_{ij} x_j\Big)^T\Big(y_i - \sum_{j=1}^N w_{ij} x_j\Big) = 0, \text{ for $i=1,\dots,N$.} \vspace{-1mm}$$
These constraints only involve scalar products from $G_c$ \eqref{eq:Gc} and are thus linearly Gram-representable. They can therefore be used in the SDP formulation of a PEP, see Proposition \ref{prop:GramPEP}. Alternatively, linear equality constraints \eqref{eq:consensus} can also be used to substitute the values of $y_i$ in the problem, allowing to reduce its size. This solution is also preferable for numerical reasons.
In any case, this allows writing PEPs that provide exact worst-case performances for the given decentralized method and the specific averaging matrix given \emph{a priori}. We call this the \textit{exact PEP formulation}. It can be applied to any matrix $W$, and not only for symmetric or doubly stochastic ones.
This can be useful for trying different network matrices and observing their impact on the worst-case performance of the algorithm. It will serve as an exact comparison baseline in the numerical experiments of Section \ref{sec:NumRes}.
The next section presents a way of representing communications in PEP that allows obtaining more general performance guarantees, valid over entire classes of network matrices and not only for a specific one.

\subsection{Averaging matrix as a variable}
We now consider that the matrix $W$ is not given \emph{a priori}, but is \emph{one of the decision variables} of the performance estimation problem with bounds on its possible eigenvalues. Hence, the PEP also looks for the worst averaging matrix among all the possible ones. Typically, the literature considers matrices that are symmetric,  doubly-stochastic and whose all eigenvalues (except $1$) are in a given range. We were unable to represent this class of matrices in the SDP PEP formulation, in particular the non-negativity assumption embedded in the doubly stochasticity. Therefore, we will consider a slightly more general class of matrices, where we relax this non-negativity assumption.

\begin{definition}[Generalized doubly stochastic matrix \cite{gds}] \label{def:gds} A matrix $W \in \Rmat{N}{N}$ is \emph{generalized doubly stochastic} if its rows and columns sum to one, i.e., if \vspace{-1.5mm}
  $$\sum_{i=1}^N w_{ij} = 1, \qquad \sum_{i=1}^N w_{ji} = 1, \quad \text{ for $j=1,\dots,N$.} \vspace{1.5mm}$$
\end{definition}
The resulting worst matrix of the PEP may thus have negative elements. However, the provided worst-case guarantees are also valid for non-negative matrices. Moreover, we note that most results from the literature exploiting spectral information of stochastic matrices do in fact not use the non-negativity of $W$ and are thus really about generalized stochastic matrices, see e.g. \cite{DGD, EXTRA, NIDS}. We analyze the impact of this relaxation in the case of DGD in Section \ref{sec:analysis_DGD}.

Formally, the search space for $W$ is restricted by the following constraints, for each agent $i\in\{1,\dots,N\}$ and each consensus steps $k\in\{1,\dots,K\}$, \vspace{-1mm}
\begin{align}
  y_i^k &= \sum_{j=1}^N w_{ij} x_j^k, \label{eq:cons_2} \\
  W &\in \Wcl{\lm}{\lp}, \label{eq:cons_1}\\[-5.5mm]
\end{align}
where $\Wcl{\lm}{\lp}$ is the set of real, symmetric and generalized doubly stochastic $N\times N$ matrices that have their eigenvalues between $\lm$ and $\lp$, except for $\lam_1 = 1$:
$$ \lm \le \lam_N\le\cdots\le\lam_2\le\lp \quad \text{where $\lm, \lp \in \qty(-1,1)$ }. $$

We do not have a direct way for representing constraints \eqref{eq:cons_2} and \eqref{eq:cons_1} in an LMI Gram-representable manner that can be embedded in an SDP PEP, hence we will use a relaxation of these constraints that we will see in Section \ref{sec:NumRes} is often close to tight. From constraints \eqref{eq:cons_2} and \eqref{eq:cons_1}, we derive new necessary conditions involving only variables $y_i^k$ and $x_i^k$, allowing to eliminate $W$ from the problem. We also show that these new constraints can be expressed in terms of $G_c$ \eqref{eq:Gc} and are therefore Gram-representable.

\subsubsection*{Notations} Let $x_i^k, y_i^k \in \Rvec{d}$ be the local variables of agent $i$ at iteration $k$ of an algorithm.
Let $X^j$, $Y^j \in \Rmat{N}{K}$ be matrices containing the $j^{\mathrm{th}}$ component of each of these local variables:
$$ X^j_{i,k} = (x_i^k)_j \quad \text{and}\quad Y^j_{i,k} = (y_i^k)_j  \qquad \text{for ~~$\substack{j=1,\dots,d \\ i=1,\dots,N \\ k=1,\dots,K}$ }$$
Each column corresponds thus to a different consensus step $k$, and each row to a different agent $i$. Using this notation, the consensus steps constraints \eqref{eq:cons_2} can simply be written as $d$ matrix equations: $Y^j = W X^j$ for each $j=1,\dots,d$.

Moreover, we can stack each $X^j$ and $Y^j$ vertically in matrices $X,Y \in \Rmat{Nd}{K}$, and write all the consensus steps \eqref{eq:cons_2} with one matrix equation:
$$\underbrace{\begin{bmatrix}  Y^{1} \\ \vdots \\ Y^{d} \end{bmatrix}}_{Y} = \underbrace{\begin{bmatrix}  W &&0 \\& \ddots &\\ 0&&W \end{bmatrix}}_{\Wt} \underbrace{\begin{bmatrix}  X^{1} \\ \vdots \\ X^{d} \end{bmatrix}}_{X}. $$
The matrix $\Wt \in \Rmat{Nd}{Nd}$ is block-diagonal repeating $d$ times $W$ and can be written as $\Wt = (I_d \otimes W)$, where $I_d \in \Rmat{d}{d}$ is the identity matrix and $\otimes$ denotes the Kronecker product. We decompose matrices $X$ and $Y$ in average and centered parts: \vspace{-1mm}
$$ X = (\Xb \otimes \mathbf{1}_N ) + \Xc, \qquad Y =  (\Yb \otimes \mathbf{1}_N) + \Yc,$$
where $\Xb$ and $\Yb$ are agents average vectors in $\Rmat{d}{K}$, defined as $\Xb_{\cdot k} = \frac{1}{N} \sum_{i=1}^N x_i^k$, $\Yb_{\cdot k} = \frac{1}{N} \sum_{i=1}^N y_i^k$ for $k=1,\dots,K$ and $\mathbf{1}_N = \qty[1\dots 1]^T \in \Rvec{N}$.
The centered matrices $\Xc, \Yc \in \Rmat{Nd}{K}$ have by definition an agent average of zero for each component and iteration: $(I_d \otimes \mathbf{1}_N)^T \Xc = \mathbf{0}_{d\times K}$.

\subsubsection*{Gram-representable relaxation of \eqref{eq:cons_2} and \eqref{eq:cons_1}}
\begin{theorem}[Consensus Constraints] \label{thm:conscons}
If $Y^j = WX^j$, for every $j=1,\dots,d$ and for a same matrix $W\in \Wcl{\lm}{\lp}$,
i.e., if $Y = (I_d \otimes W) X$, then
\begin{enumerate}[(i)]
  \item The matrices $X^TY$ and $\Xc^T\Yc$ are symmetric,
  \item The following constraints are satisfied \vspace{-0.5mm}
  \begin{align}
    \Xb &= \Yb, \label{eq:eq_mean} \\
    \lm \Xc^T \Xc ~ \preceq  ~\Xc^T \Yc~ &\preceq ~\lp \Xc^T \Xc, \label{eq:scal_cons} \\
    (\Yc - \lm \Xc)^T(\Yc - \lp \Xc) ~&\preceq ~0, \hspace{20mm} \label{eq:var_red} \\[-6.5mm]
  \end{align}
  where the notations $\succeq$ and $\preceq$ denote respectively positive and negative semi-definiteness.
  \item Constraints  \eqref{eq:eq_mean}, \eqref{eq:scal_cons}, \eqref{eq:var_red} are LMI Gram-representable.
\end{enumerate}
\end{theorem}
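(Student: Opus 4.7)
My plan is to consolidate the relation $Y = (I_d \otimes W) X$ with the symmetry and generalized double stochasticity of $W$, and then read off each part as an algebraic consequence. The structural facts $W\mathbf{1}_N = \mathbf{1}_N$ and $W^T = W$, combined with the decomposition $X = (\Xb \otimes \mathbf{1}_N) + \Xc$, give $\Yb = \Xb$ (which is exactly \eqref{eq:eq_mean}) and $\Yc = (I_d \otimes W)\Xc$. Part (i) then follows immediately: since $I_d \otimes W$ is symmetric, both $X^T Y = X^T (I_d \otimes W) X$ and $\Xc^T \Yc = \Xc^T (I_d \otimes W) \Xc$ are symmetric.

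For part (ii), the core observation is that each column of $\Xc$ lies in the range of $I_d \otimes P$ with $P = I_N - \tfrac{1}{N}\mathbf{1}_N\mathbf{1}_N^T$, so its $d$ blocks of size $N$ are each orthogonal to $\mathbf{1}_N$; on $\mathbf{1}_N^{\perp}$ the symmetric matrix $W$ has eigenvalues in $[\lm, \lp]$. I would then pick an arbitrary $u \in \Rvec{K}$, set $v := \Xc u$, and split $v = (v_1^T,\dots,v_d^T)^T$ with $\mathbf{1}_N^T v_j = 0$. The identity $u^T \Xc^T \Yc u = \sum_{j=1}^d v_j^T W v_j$ is then sandwiched between $\lm \sum_j \|v_j\|^2 = \lm u^T \Xc^T \Xc u$ and $\lp \sum_j \|v_j\|^2 = \lp u^T \Xc^T \Xc u$, establishing \eqref{eq:scal_cons}. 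For \eqref{eq:var_red}, the same decomposition yields $u^T (\Yc - \lm \Xc)^T (\Yc - \lp \Xc) u = \sum_j v_j^T (W - \lm I_N)(W - \lp I_N) v_j$, and each summand is nonpositive because every eigenvalue $\lambda \in [\lm, \lp]$ satisfies $(\lambda - \lm)(\lambda - \lp) \le 0$; the symmetry of the bracketed matrix, needed to interpret $\preceq 0$ as a genuine LMI, is exactly part (i).

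For part (iii), I would verify that each constraint matrix has entries that are linear combinations of the scalar products populating $G_c$. Writing $\Xc = (I_d \otimes P)X$, one gets $(\Xc^T \Xc)_{k,k'} = \sum_i \langle x_i^k - \tfrac{1}{N}\sum_{i'} x_{i'}^k,\, x_i^{k'} - \tfrac{1}{N}\sum_{i''} x_{i''}^{k'} \rangle$, which is linear in the entries of $G_c$, and the analogous identities hold for $\Xc^T \Yc$ and $\Yc^T \Yc$. Thus \eqref{eq:scal_cons} and \eqref{eq:var_red} are LMI Gram-representable. For \eqref{eq:eq_mean} I would recast $\Xb = \Yb$ either as the LMI $(\Xb - \Yb)^T(\Xb - \Yb) \preceq 0$ (tight because the left-hand side is automatically PSD) or equivalently as the single linear trace condition $\mathrm{tr}\bigl((\Xb - \Yb)^T(\Xb - \Yb)\bigr) = 0$, whose integrand is again linear in $G_c$.

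The step I expect to require the most care is the $d$-dimensional reduction in part (ii), since this is precisely the generalization flagged as new with respect to \cite{PEP_dec}. The Kronecker structure $I_d \otimes W$ decouples the quadratic form into $d$ scalar problems on $\mathbf{1}_N^{\perp}$, but to upgrade a blockwise scalar bound to a matrix inequality I must test against an arbitrary $u \in \Rvec{K}$ and exploit that the pointwise bounds are uniform in $u$. Writing that passage carefully — and being explicit that $\Xc u$ actually lands in $\mathrm{range}(I_d \otimes P)$ for every $u$ — is the part of the proof I would spell out in greatest detail.
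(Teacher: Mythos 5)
Your proof is correct and follows essentially the same route as the paper's: both reduce the matrix inequalities to scalar ones by testing against an arbitrary vector in $\Rvec{K}$ and exploiting that the centered columns of $\Xc$ are orthogonal to the eigenvalue-$1$ eigenspace of $I_d\otimes W$, so that the spectrum of $W$ on the orthogonal complement of $\mathbf{1}_N$ controls the quadratic forms. Your blockwise identity $u^T\Xc^T\Yc u=\sum_j v_j^T W v_j$ is just a repackaging of the paper's expansion in the eigenbasis of $I_d\otimes W$, and your Gram-representability argument for part (iii) matches the paper's up to writing the mean constraint as one trace equation instead of $K$ separate ones.
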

\begin{proof}
    First, we average elements from both sides of the assumption $Y^j = WX^j$ to obtain constraint \eqref{eq:eq_mean}:
    $$ \Yb_{j\cdot} = \frac{\mathbf{1}^T Y^j}{N} = \frac{\mathbf{1}^T WX^j}{N} = \frac{\mathbf{1}^T X^j}{N} = \Xb_{j\cdot} \text{ for $j=1,\dots,d$} $$
    where $\mathbf{1}^TW = \mathbf{1}^T$ follows from $W$ being generalized doubly stochastic, i.e., its rows and columns sum to one (see Definition \ref{def:gds}).

    In the sequel, we will consider all the components $j$ at once, and then we use the notation $Y=\Wt X$, where $\Wt = I_d \otimes W$ is in $\Rmat{Nd}{Nd}$. Since $\Wt$ is a block-diagonal matrix repeating $d$ times $W$, if $W\in \Wcl{\lm}{\lp}$, then $\Wt\in \Wcl{\lm}{\lp}$. \\
    The symmetry of the matrix $X^TY$ follows from the assumption $Y = \Wt X$, with $\Wt$ symmetric. The same argument shows the symmetry of $\Xc^T\Yc$, because $Y = \Wt X$ and $\Xb = \Yb$ imply $\Yc = \Wt \Xc$. \\
    Since the averaging matrix $\Wt$ is real and symmetric, we can take an orthonormal basis $\mathbf{v_1},\dots,\mathbf{v_{Nd}}$ of eigenvectors, corresponding to real eigenvalues $ \lam_{1}\ge\cdots \ge \lam_{Nd}$.
    Since $\Wt$ is composed of $d$ diagonal blocks of the generalized doubly stochastic matrix $W$, it has the same eigenvalues as $W$ but with a multiplicity $d$ times larger for each of them.
    Therefore, its largest eigenvalues are $\lam_j = 1$ ($j=1,\dots,d$) and corresponds to the eigenvectors $\mathbf{v_j} = [0\dots \mathbf{1}_N^T\dots 0]^T$ where the position for $\mathbf{1}_N$ corresponds to the position of the block $j$ in $\Wt$. These eigenvectors can be written in matrix form as $V_{1,\dots,d} = I_d \otimes \mathbf{1}_N $. By assumption, other eigenvalues are such that
    $$\lm \le \lam_i \le \lp \text{ for $i=d+1,\dots,Nd$, with $\lm, \lp \in (-1,1)$.} $$
    Let us now consider a combination $\Xc z$ of the columns of the matrix $\Xc$, for an arbitrary $z \in \Rvec{K}$. It can be decomposed in the eigenvector basis of $\Wt$, and used to express the combination $\Yc z$ as well: \\[-8mm]

    \small
    \begin{align} \label{eq:ev_basis}
      \hspace{-2mm} \Xc z = \sum_{i = d+1}^{Nd} \gamma_i \mathbf{v}_i, \text{ and }
      \Yc z = \Wt \Xc z = \sum_{i = d+1}^{Nd} \gamma_i \lambda_i \mathbf{v}_i, \hspace{4mm}
    \end{align}
    \normalsize
    where $\gamma_i$ are real coefficients. These coefficients for $\mathbf{v_1}, \mydots, \mathbf{v_d}$ are zero because $\Xc z$ is orthogonal to these eigenvectors associated with eigenvalue $ \lam_j = 1$. Indeed $\Xc$ is centered with respect to the agents, for any component $j$ or iteration k and then we have
  $$ (I_d \otimes \mathbf{1}_N)^T \Xc  = V_{1,\dots,d}^T \Xc = \mathbf{0}_{d\times K}.$$
    Using the decomposition \eqref{eq:ev_basis} to compute the scalar product $z^T \Xc \Yc z$ for any $z \in \Rvec{K}$ leads to the following scalar inequalities
    \begin{align}
      z^T \Xc^T \Yc z = \sum_{i = d+1}^{Nd} \gamma_i^2 \lambda_i &\ge  \lm z^T \Xc^T \Xc z, \\[-3mm]
                                                            &\le  \lp z^T  \Xc^T \Xc z.
    \end{align}
    Having these inequalities satisfied for all $z \in \Rvec{K}$, is equivalent to \eqref{eq:scal_cons}.
    In the same way, \eqref{eq:var_red} is obtained by verifying that the following inequality holds for all $z \in \Rvec{K}$:
    $$ (\Yc z - \lm \Xc z)^T(\Yc z - \lp \Xc z) \le 0. $$
    This can be done by substituting $\Xc z$ and $\Yc z$ using equation \eqref{eq:ev_basis}, and by using the bounds on $\lam_i$ ($i=d+1,\dots,Nd$). \\
    Finally, we prove part (iii) of the theorem. Constraint \eqref{eq:eq_mean} is linearly (and thus also LMI) Gram-representable because it can be expressed using only elements of $G_c$ \eqref{eq:Gc}, i.e. scalar product between $x_i^k $ and $y_j^k$,
    $$\footnotesize \Biggl(\frac{1}{N} \sum_i \qty(x_i^k-y_i^k) \Biggr)^T\Biggl(\frac{1}{N} \sum_j \qty(x_j^k-y_j^k) \Biggr) = 0 ~ \text{ for $k=1,...,K$.}$$
    Constraints \eqref{eq:scal_cons} and  \eqref{eq:var_red} are LMI Gram-representable because they are LMIs whose entries can be defined  using only the entries of $G_c$, which is a submatrix of the full Gram matrix $G$ of scalar products. For example, the entry $k,l$ of $\Xc^T\Xc$ can be expressed as the scalar product of columns $k$ and $l$ of $\Xc$
    $$ \qty(\Xc^T)_{k \cdot} \qty(\Xc)_{\cdot l} = \sum_{i=1}^N \Bigl(x_i^k - \frac{1}{N} \sum_j x_j^k \Bigr)^T \Bigl(x_i^l - \frac{1}{N} \sum_j x_j^l \Bigr). $$
\end{proof}
\smallskip
Using Theorem \ref{thm:conscons}, we can relax constraints \eqref{eq:cons_2} and \eqref{eq:cons_1} and replace them by \eqref{eq:eq_mean}, \eqref{eq:scal_cons} and  \eqref{eq:var_red}, which are LMI Gram-representable.
Then, Proposition \ref{prop:GramPEP} allows to write a relaxed SDP formulation of a PEP providing worst-case results valid for the entire spectral class of matrices $\Wcl{\lm}{\lp}$. We call this formulation the \textit{spectral PEP formulation} and its results the \textit{spectral worst-case}.
This SDP formulation has matrix $G\succeq0$ and vectors $f_{i,v}$ ($i=1,...,N$) as decision variables. The vector $f_{i,v}$ contains the function values of $f_i$ at the different iterates $x_i^k$ ($k=1,...,K$). The values in $G$ correspond to the scalar products of the iterates ($x_i^k, y_i^k$) and the gradients ($g_i^k$) of the different agents.

When different averaging matrices are used for different sets of consensus steps, the constraints from Theorem \ref{thm:conscons} can be applied independently to each set of consensus steps.

Constraint \eqref{eq:eq_mean} is related to the stochasticity of the averaging matrix and imposes that variable $x$ has the same agents average as $y$, for each consensus step and for each component.
Linear matrix inequality constraints \eqref{eq:scal_cons} and  \eqref{eq:var_red} imply in particular scalar constraints for the diagonal elements.
They correspond to independent constraints for each consensus step, i.e., for each column $\xc$ and $\yc$ of matrices $\Xc$, $\Yc$: \vspace{-0.5mm}
\begin{align}
  \lm \xc^T \xc \le \xc^T\yc &\le \lp \xc^T \xc, \label{eq:scal_cons_1}\\
  (\yc - \lm \xc)^T(\yc - \lp \xc) & \le 0. \label{eq:scal_cons_2} \\[-5.5mm]
\end{align}
These constraints imply in particular that \vspace{-0.5mm} $$\yc^T \yc \le \lmax^2~ \xc^T\xc,\quad \text{where $\lmax = \max(|\lm|,|\lp|)$,} \vspace{-0.5mm}$$
meaning that the disagreement between the agents, measured by $\yc^T \yc$ for $y$ and $\xc^T \xc$ for $x$, is reduced by a factor $\lmax^2 \in [0,1)$ after a consensus.
But constraints \eqref{eq:scal_cons} and  \eqref{eq:var_red} also allow linking different consensus steps to each other, \textit{via} the impact of off-diagonal terms, in order to exploit the fact that these steps use the same averaging matrix. \\
We can also interpret constraints \eqref{eq:scal_cons} and \eqref{eq:var_red} as a sum over all the dimensions $j=1,\dots,d$. Each term of this sum corresponds to the same constraint expression as \eqref{eq:scal_cons} and \eqref{eq:var_red}, but applied only to $\Xc^j$ and $\Yc^j \in \Rvec{N \times K}$.
Indeed, any product involving $\Xc$ or $\Yc \in \Rvec{Nd \times K}$ can be written as a sum over the dimensions $d$. For instance, for $\Xc^T \Xc$, we have \vspace{-1.5mm}
$$\Xc^T \Xc = \sum_{j=1}^d (\Xc^j)^T \Xc^j.$$

The following proposition show that constraint \eqref{eq:scal_cons} is redundant when we consider a symmetric range of eigenvalue, i.e., when $\lp = -\lm$.
\begin{proposition}[Symmetric range of eigenvalues] \label{prop:sym_ev}
  If $\lp = - \lm = \lam \in \qty[0,1]$, then LMI constraints \eqref{eq:scal_cons} and \eqref{eq:var_red} from Theorem \ref{thm:conscons} are equivalent to
  \begin{equation}
    \Yc^T\Yc \preceq \lambda^2 \Xc^T\Xc. \label{eq:var_red_sym}
  \end{equation}
\end{proposition}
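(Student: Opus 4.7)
The plan is to exploit the symmetry of $\Xc^T\Yc$ established in Theorem \ref{thm:conscons}(i), which collapses the cross terms in \eqref{eq:var_red} as soon as we specialize to $\lm = -\lambda$ and $\lp = \lambda$. I would carry out three short steps and then assemble the equivalence.

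First, I would expand $(\Yc + \lambda\Xc)^T(\Yc - \lambda\Xc) = \Yc^T\Yc + \lambda(\Xc^T\Yc - \Yc^T\Xc) - \lambda^2 \Xc^T\Xc$ and invoke $\Xc^T\Yc = \Yc^T\Xc$ from part (i) of the theorem to cancel the linear-in-$\lambda$ term. This shows that, in the symmetric-range case, \eqref{eq:var_red} is literally the same LMI as \eqref{eq:var_red_sym}. Second, I would prove that \eqref{eq:var_red_sym} alone implies \eqref{eq:scal_cons} through a Cauchy--Schwarz argument: for any $z \in \Rvec{K}$, \eqref{eq:var_red_sym} yields $\|\Yc z\|^2 \le \lambda^2 \|\Xc z\|^2$, and hence $|z^T \Xc^T\Yc z| = |(\Xc z)^T(\Yc z)| \le \|\Xc z\|\,\|\Yc z\| \le \lambda\, z^T \Xc^T\Xc z$. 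Because $\Xc^T\Yc$ is symmetric, this scalar inequality for every $z$ is equivalent to the LMI $-\lambda \Xc^T\Xc \preceq \Xc^T\Yc \preceq \lambda \Xc^T\Xc$, i.e.\ \eqref{eq:scal_cons}.

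Third, I would assemble the equivalence. From \eqref{eq:var_red_sym} we recover \eqref{eq:var_red} by reversing Step 1 and \eqref{eq:scal_cons} by Step 2, giving one direction. Conversely, \eqref{eq:var_red} alone already implies \eqref{eq:var_red_sym} by Step 1, so assuming both \eqref{eq:scal_cons} and \eqref{eq:var_red} gives \eqref{eq:var_red_sym}. Hence the pair of LMIs collapses to the single one.

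The main subtle step I expect is the second one: passing from a scalar Cauchy--Schwarz bound on the quadratic form $z^T\Xc^T\Yc z$ to an LMI on $\Xc^T\Yc$ itself. This transition is valid only because $\Xc^T\Yc$ is symmetric; without that, Cauchy--Schwarz would merely control its symmetric part and \eqref{eq:scal_cons} would not follow. Invoking Theorem \ref{thm:conscons}(i) is therefore the key ingredient, and the remaining work is routine algebraic manipulation.
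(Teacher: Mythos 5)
Your proposal is correct and follows essentially the same route as the paper: observe that \eqref{eq:var_red} collapses to \eqref{eq:var_red_sym} when $\lp=-\lm=\lam$, then derive \eqref{eq:scal_cons} from \eqref{eq:var_red_sym} via Cauchy--Schwarz applied to $(\Xc z)^T(\Yc z)$ for arbitrary $z$. Your added remarks --- writing out the expansion of $(\Yc+\lam\Xc)^T(\Yc-\lam\Xc)$ and flagging that the passage from the scalar bound on $z^T\Xc^T\Yc z$ to the LMI relies on the symmetry of $\Xc^T\Yc$ from Theorem~\ref{thm:conscons}(i) --- are correct details that the paper leaves implicit.
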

\begin{proof} Constraint \eqref{eq:var_red} is equivalent to \eqref{eq:var_red_sym} when $\lp = - \lm = \lam$.
  Constraint \eqref{eq:scal_cons} with $\lp = - \lm = \lam$ becomes
  \begin{equation}
    -\lam \Xc^T \Xc ~ \preceq  ~\Xc^T \Yc~ \preceq ~\lam \Xc^T \Xc, \label{eq:scal_cons_sym}
  \end{equation}
  We now show that constraint \eqref{eq:scal_cons_sym} is implied by \eqref{eq:var_red_sym}, which achieves the proof.
  When $\lam \ge 0$, constraint \eqref{eq:var_red_sym} is equivalent to the following bound on $\|\Yc z\|$, for any $z\in \Rvec{K}$,
  \begin{equation} \label{eq:var_red_sym_eq}
    \|\Yc z\| \le \lam \|\Xc z\| \qquad \text{for any $z\in \Rvec{K}$.}
  \end{equation}
 Moreover, the scalar product between $\Xc z$ and $\Yc z$ can be bounded, for any $z\in \Rvec{K}$, using the Cauchy-Schwarz inequality
  \begin{equation} \label{eq:scal_cons_sym_eq}
     - \|\Xc z\| ~\|\Yc z\| \le (\Xc z)^T(\Yc z) \le \|\Xc z\| ~\|\Yc z\|.
  \end{equation}
  We can combine inequality \eqref{eq:scal_cons_sym_eq} with \eqref{eq:var_red_sym_eq} and obtain
  $$ - \lam z^T\Xc^T\Xc z \le z^T\Xc^T\Yc z \le \lam z^T \Xc^T\Xc z,~ \text{for any $z\in \Rvec{K}$,}$$
  which is equivalent to \eqref{eq:scal_cons_sym}.
\end{proof}
Proposition \ref{prop:sym_ev} means that when we consider a class of matrices with a symmetric range of eigenvalues, i.e., $\Wcl{-\lam}{\lam}$, we can remove constraint \eqref{eq:scal_cons} from the spectral PEP formulation, without modifying its result. Other constraints from Theorem \ref{thm:conscons}, including the symmetry of $X^TY$ and $\Xc^T\Yc$, should still be imposed.

\subsubsection*{Recovering the worst averaging matrix}
Theorem \ref{thm:conscons} provides necessary constraints for describing a set of consensus steps $Y^j=WX^j$ that use an unknown averaging matrix $W$ from a spectral class of symmetric generalized doubly-stochastic matrices $\Wcl{\lm}{\lp}$.
But these constraints are not known to be sufficient; so even if matrices $X$ and $Y$ satisfy constraints \eqref{eq:eq_mean}, \eqref{eq:scal_cons}, \eqref{eq:var_red}, we do not know if there is a matrix $W \in \Wcl{\lm}{\lp}$ such that $Y^j=WX^j$ for all $j=1,\dots,d$.
The question of the existence of such a matrix can be expressed as follows: is the optimal cost of the following problem equal to 0?
\begin{mini}{\Wh \in \Wcl{\lm}{\lp}}{\mynorm{\Yr-\Wh\Xr}_F\label{eq:SDP_Wh}}{}{}
\end{mini}
where matrices $\Xr, \Yr \in \Rmat{N}{Kd}$ stack the $X^j, Y^j \in \Rmat{N}{K}$ horizontally and thus reshape matrices $X$ and $Y$. This reshape is needed for recovering a matrix $\Wh$ which is identical for every dimension $j$ and that has appropriate size ($N \times N$).
Note that problem \eqref{eq:SDP_Wh} can easily be solved since it is an SDP, as the constraint $\Wh \in \Wcl{\lm}{\lp}$ can be formulated as $ {\lm I \preceq ( \Wh - \frac{\mathbf{11}^T}{N}) \preceq \lp I}$ together with $\Wh^T = \Wh$ and $\Wh \mathbf{1} = \mathbf{1}$.
If the optimal cost of \eqref{eq:SDP_Wh} is zero, the optimal value of $\Wh$ is a valid worst averaging matrix $W$.
Alternatively, problem \eqref{eq:SDP_Wh} without any constraints is a least-square problem whose solution is cheaper to compute and is given by ${\Whp = \Yr\Xr^{\dag}}$, where $\Xr^{\dag}$ is the pseudo-inverse of $\Xr$. If its remainder $\mynorm{\Yr-\Whp\Xr}_F$ is zero, then we check a posteriori that the constraint $\Whp \in \Wcl{\lm}{\lp}$ is satisfied. This was often the case in our experiments from Section \ref{sec:NumRes} and allows to recover rapidly the value of the worst averaging matrix. Note, though, that when $\Whp$ does not satisfy the required conditions, a valid $W$ might still exist, and so one must solve \eqref{eq:SDP_Wh} to find it.

We have now all the elements to write and solve PEPs for decentralized optimization methods, including ways of representing the consensus steps in a Gram-representable manner, allowing to formulate the PEP as an SDP. In the next section, we demonstrate the methodology by analyzing 3 decentralized methods.

\section{Demonstration of our methodology} \label{sec:NumRes}
Using results from previous sections, we can build two PEP formulations for analyzing the worst-case performance of a large class of decentralized optimization methods: the exact and the spectral formulations. These formulations allow to obtain rapidly and automatically accurate numerical performance bound.
We demonstrate the power of the methodology by focusing on the analysis of 3 selected algorithms.
For each algorithm, we use the same settings (performance criterion, initial condition, class of functions,...) as its theoretical bound, in order to obtain comparable results. These particular settings are not a limitation from our PEP formulations, which allows to represent a larger diversity of situations.
\subsection{Distributed (sub)gradient descent (DGD)} \label{sec:analysis_DGD}
We consider $K$ iterations of DGD described by \eqref{eq:DGD_cons} and \eqref{eq:DGD_comp}, with constant step-size $\alpha$, in order to solve problem \eqref{opt:dec_prob}, i.e., minimizing $f(x) = \frac{1}{N}\sum_{i=1}^N f_i(x)$, with $x^*$ as minimizer of $f$.
There are different studies on DGD; e.g., \cite{DGD1} shows that its iterates converge to a neighborhood of the optimal solution $x^*$ when the step-size is constant. In the sequel, we take as baseline the results of a recent survey \cite{DGD} providing a theoretical bound for the functional error at the average of all the iterates, valid when subgradients are bounded. \smallskip

\begin{theorem}[Performance of DGD {{\cite[Theorem 8]{DGD}}}] \label{thm:bound}
~\\ Let $f_i,\dots,f_N \in \mc{F}_R$, i.e. convex local functions with subgradients bounded by $R$. Let $x^0$ be an identical starting point for all agents such that $\|x^0 - x^*\|^2 \le D^2$. And let $W \in \Wcl{-\lam}{\lam}$ for some $\lam \in [0,1)$, i.e. $W$ is a symmetric and generalized doubly stochastic matrix with eigenvalues $\lam_2,\dots,\lam_N \in \qty[-\lam,\lam]$. \\
If we run DGD for $K$ steps with a constant step-size $\alpha = \frac{1}{\sqrt{K}}$, then there holds\footnote{Note the factor 2 in the second term of the bound \eqref{eq:th_bound} was missing in \cite{DGD} but its presence was confirmed by the authors of \cite{DGD}.}
\vspace{-3pt}
\begin{equation} \label{eq:th_bound}
  f(\xmoy ) - f(x^*) \le \frac{D^2 + R^2}{2 \sqrt{K}} + \frac{2 R^2}{\sqrt{K}(1-\lam)}, \vspace{-3pt}
\end{equation}
where $\xmoy = \frac{1}{N(K+1)} \sum_{i=1}^N \sum_{k=0}^K x_i^k $ is the average over all the iterations and all the agents.
\end{theorem}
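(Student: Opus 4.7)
The plan is to combine the standard subgradient descent analysis, applied to the evolution of the agents' average $\bar{x}^k = \frac{1}{N}\sum_i x_i^k$, with a disagreement bound coming from the spectral contraction of $W$. Since $W$ is generalized doubly stochastic, averaging \eqref{eq:DGD_cons}--\eqref{eq:DGD_comp} gives the clean recursion $\bar{x}^{k+1} = \bar{x}^k - \frac{\alpha}{N}\sum_i g_i^k$, where $g_i^k$ is a subgradient of $f_i$ at $x_i^k$ with $\|g_i^k\|\le R$.

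First, I would run the standard descent-style expansion of $\|\bar{x}^{k+1}-x^*\|^2$: use $\|\frac{1}{N}\sum_i g_i^k\|^2 \le R^2$ for the quadratic term, and for the cross term decompose $g_i^{k\,T}(\bar{x}^k-x^*) = g_i^{k\,T}(x_i^k-x^*) + g_i^{k\,T}(\bar{x}^k - x_i^k)$. Convexity of $f_i$ bounds the first part below by $f_i(x_i^k)-f_i(x^*)$, and a second convexity/bounded-subgradient argument gives $f_i(x_i^k)\ge f_i(\bar{x}^k)-R\|x_i^k-\bar{x}^k\|$; the second part is bounded below by $-R\|x_i^k-\bar{x}^k\|$. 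Averaging over $i$ and summing over $k=0,\dots,K$ (telescoping, and using $\bar{x}^0=x^0$, $\|\bar x^0 - x^*\|\le D$) yields
\begin{equation*}
\sum_{k=0}^{K}\bigl[f(\bar{x}^k)-f(x^*)\bigr] \;\le\; \frac{D^2}{2\alpha} + \frac{\alpha R^2(K+1)}{2} + \frac{2R}{N}\sum_{k=0}^{K}\sum_{i=1}^{N}\|x_i^k-\bar{x}^k\|,
\end{equation*}
and then Jensen's inequality applied to the convex $f$ lifts this to the claimed quantity $f(\xmoy)-f(x^*)$.

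The heart of the proof, and the step I expect to be most delicate, is the disagreement bound. Subtracting $\bar{x}^{k+1}$ from $x_i^{k+1}$ gives $x_i^{k+1}-\bar{x}^{k+1} = \sum_j w_{ij}(x_j^k-\bar{x}^k) - \alpha(g_i^k-\bar g^k)$, so stacking the centered iterates in $\xc^k \in \Rvec{N\cdot d}$ and using $W\in \Wcl{-\lam}{\lam}$, Proposition \ref{prop:sym_ev} (or directly the spectral bound on the restriction of $W$ to the subspace orthogonal to $\mathbf{1}$) gives $\|\xc^{k+1}\| \le \lam\|\xc^k\| + \alpha\|g^k-\bar g^k\mathbf{1}\|$. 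Since $\sum_i\|g_i^k-\bar g^k\|^2 \le \sum_i\|g_i^k\|^2\le NR^2$ and $\xc^0=0$, unrolling the scalar recursion yields $\|\xc^k\|\le \frac{\alpha R\sqrt{N}}{1-\lam}$, from which Cauchy–Schwarz gives $\sum_i\|x_i^k-\bar{x}^k\|\le \frac{\alpha R N}{1-\lam}$.

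Finally, I would plug this uniform-in-$k$ bound back into the accumulated inequality, divide by $K+1$, and substitute $\alpha=1/\sqrt{K}$. The three resulting terms $\tfrac{D^2}{2\alpha(K+1)}$, $\tfrac{\alpha R^2}{2}$, and $\tfrac{2\alpha R^2}{1-\lam}$ become $\tfrac{D^2\sqrt{K}}{2(K+1)}\le\tfrac{D^2}{2\sqrt K}$, $\tfrac{R^2}{2\sqrt K}$ and $\tfrac{2R^2}{\sqrt K(1-\lam)}$, matching \eqref{eq:th_bound} exactly. The main obstacle is really bookkeeping: keeping the convexity-with-bounded-subgradient inequalities and the spectral contraction aligned so that the $\tfrac{1}{1-\lam}$ factor appears only in the disagreement term and not in the dominant $\tfrac{D^2}{\sqrt K}$ part.
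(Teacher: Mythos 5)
Your proof is correct. Note that the paper itself gives no proof of this statement: it is quoted verbatim from \cite[Theorem 8]{DGD} (with the corrected factor $2$ in the second term), so there is no internal argument to compare against. Your derivation is the standard one underlying that cited result: the exact average recursion $\bar{x}^{k+1}=\bar{x}^k-\alpha\bar{g}^k$ from column-stochasticity, the subgradient/Lipschitz transfer $f_i(x_i^k)\ge f_i(\bar{x}^k)-R\|x_i^k-\bar{x}^k\|$ plus the Cauchy--Schwarz cross term (which together produce the coefficient $2R$ on the disagreement, and hence the corrected $\tfrac{2R^2}{\sqrt{K}(1-\lam)}$), the spectral contraction of the centered iterates giving $\sum_i\|x_i^k-\bar{x}^k\|\le\tfrac{\alpha RN}{1-\lam}$ from $\xc^0=0$, and Jensen to pass to $f(\xmoy)$. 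All steps check out, the constants match \eqref{eq:th_bound} exactly, and consistently with the paper's remark your argument never uses non-negativity of $W$, only generalized double stochasticity and the spectral range, so it covers $\Wcl{-\lam}{\lam}$ as stated.
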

Theorem \ref{thm:bound} was stated in \cite{DGD} for doubly stochastic matrix $W$ but the proof never uses the non-negativity assumption and therefore it also holds for generalized doubly stochastic matrices.
For comparison purposes, we will analyze the performance of DGD using our PEP formulations in exactly the same settings as Theorem \ref{thm:bound}.
Our first PEP formulation searches for solutions to the following maximization problem: \vspace{-2.2mm}
\begin{align}
  \hspace{-1cm}\max_{\substack{\footnotesize{~x^*, f_i, x^0_i,y_i^0\dots x^K_i,y_i^K} \\ \footnotesize{\text{for } i=1,\dots,N}}} ~ & \frac{1}{N}\sum_{i=1}^N \qty(f_i(\xmoy)- f_i(x^*)) &\tag{DGD$(W)$-PEP} \label{prob:DGD_PEP}\\
  \text{s.t.}  \hspace{12mm}    & \hspace*{-10mm}  f_i \in \mc{F}_R  &\small{\forall i} \\[-0.7mm]
       & \hspace*{-10mm} x^* = \underset{x}{\mathrm{argmin}}~ \frac{1}{N}\sum_{i=1}^N f_i(x),& \\[-0.8mm]
       & \hspace*{-10mm} \|x^0_i - x^*\|^2 \le D^2 \quad \text{and} \quad x_i^0 = x_j^0 & \small{\forall i, j }\\[-0.8mm]
       & \hspace*{-10mm} y_i^k = \sum_{j=1}^N w_{ij} x_j^k, & \small{\forall i, k} \label{eq:DGD-PEP-cons}\\[-0.7mm]
       & \hspace*{-10mm} x_i^{k+1} = y_i^k - \alpha^k \nabla f_i(x_i^k), & \small{\forall i, k} \\[-5mm]
\end{align}
The objective function and the constraints of \eqref{prob:DGD_PEP} are all linearly Gram-representable and the problem can then be formulated as an SDP, according to Proposition \ref{prop:GramPEP}.
This is referred to as the \emph{exact formulation} because it finds the exact worst-case performance of the algorithm for a specific given matrix $W$.
The second formulation relaxes the consensus constraints imposed by equation \eqref{eq:DGD-PEP-cons} and replaces them with the constraints from Theorem \ref{thm:conscons}, with $-\lm=\lp = \lam$. Those are LMI Gram-representable (see Theorem \ref{thm:conscons}) and can then be used in the SDP formulation of PEP, according to Proposition \ref{prop:GramPEP}. This formulation is referred to as the \emph{spectral formulation} and provides \emph{spectral worst-cases},
i.e., upper bounds on the worst-case performances of the algorithm, valid for any matrix $W \in \Wcl{-\lam}{\lam}$. These spectral bounds can thus be compared with the bound from Theorem \ref{thm:bound}.

In our experiments, we focus on the situation where $D = 1$ and $R = 1$, but the results obtained can be scaled up to general values using changes of variables, see Appendix \ref{annexe:scaling}. \smallskip

\paragraph{Impact of the number of agents $N$}
In Fig. \ref{fig:wc_Nevol}, we observe that the results of the spectral formulation are \emph{independent} of the number of agents $N \ge 2$ in the problem. This is shown for $K$ = 5 iterations and different spectral ranges.
This observation has been confirmed for other values of $K$ (10, 15, and 20).
The theoretical performance bound from Theorem \ref{thm:bound} is also independent of $N$. Therefore, in the sequel, we analyze the spectral formulation for $N=3$, which keeps the computational complexity low while keeping some non-trivial network matrices.

\begin{figure}[h!]
  \vspace{-1mm}
  \centering
  \includegraphics[width=0.5\textwidth]{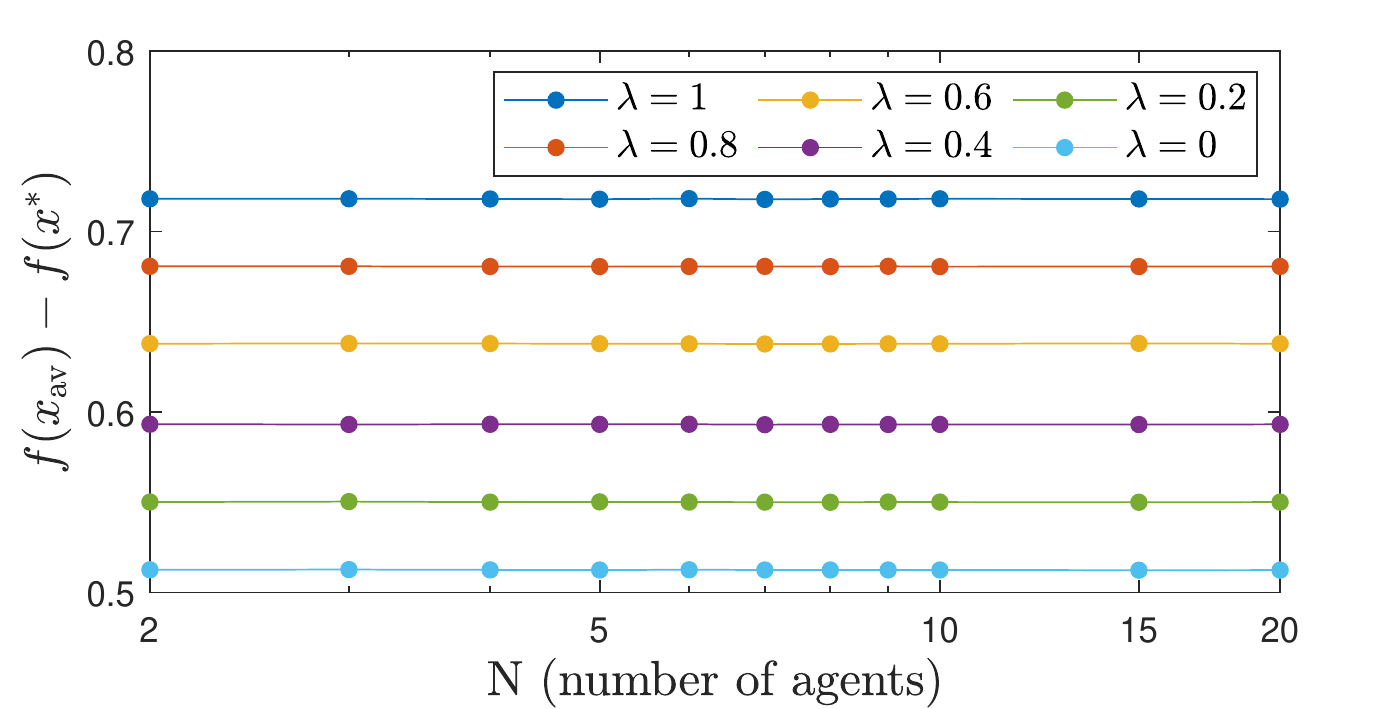}
    \caption{Independence of $N$ for the spectral worst-case performance of $5$ iterations of DGD in the setting of Theorem \ref{thm:bound}. \vspace{-4mm}}
  \label{fig:wc_Nevol}
\end{figure}

\paragraph{Comparison with Theorem \ref{thm:bound}}
We compare the spectral bound with the theoretical bound from Theorem \ref{thm:bound} for different ranges of eigenvalues $\qty[-\lam,\lam]$ for the network matrix. The value of $1-\lam$ corresponds to the algebraic connectivity of the network.
Fig. \ref{fig:wc_lamevol_N3} shows the evolution of both bounds with $\lam$ for $K = 10$ iterations of DGD with $N = 3$ agents. We observe that the spectral worst-case performance bound (in blue) largely improves on the theoretical one (in red), especially when $\lam$ approaches 1, in which case the theoretical bound grows unbounded.

\begin{figure}[h!]
  \centering
  \includegraphics[width=0.5\textwidth]{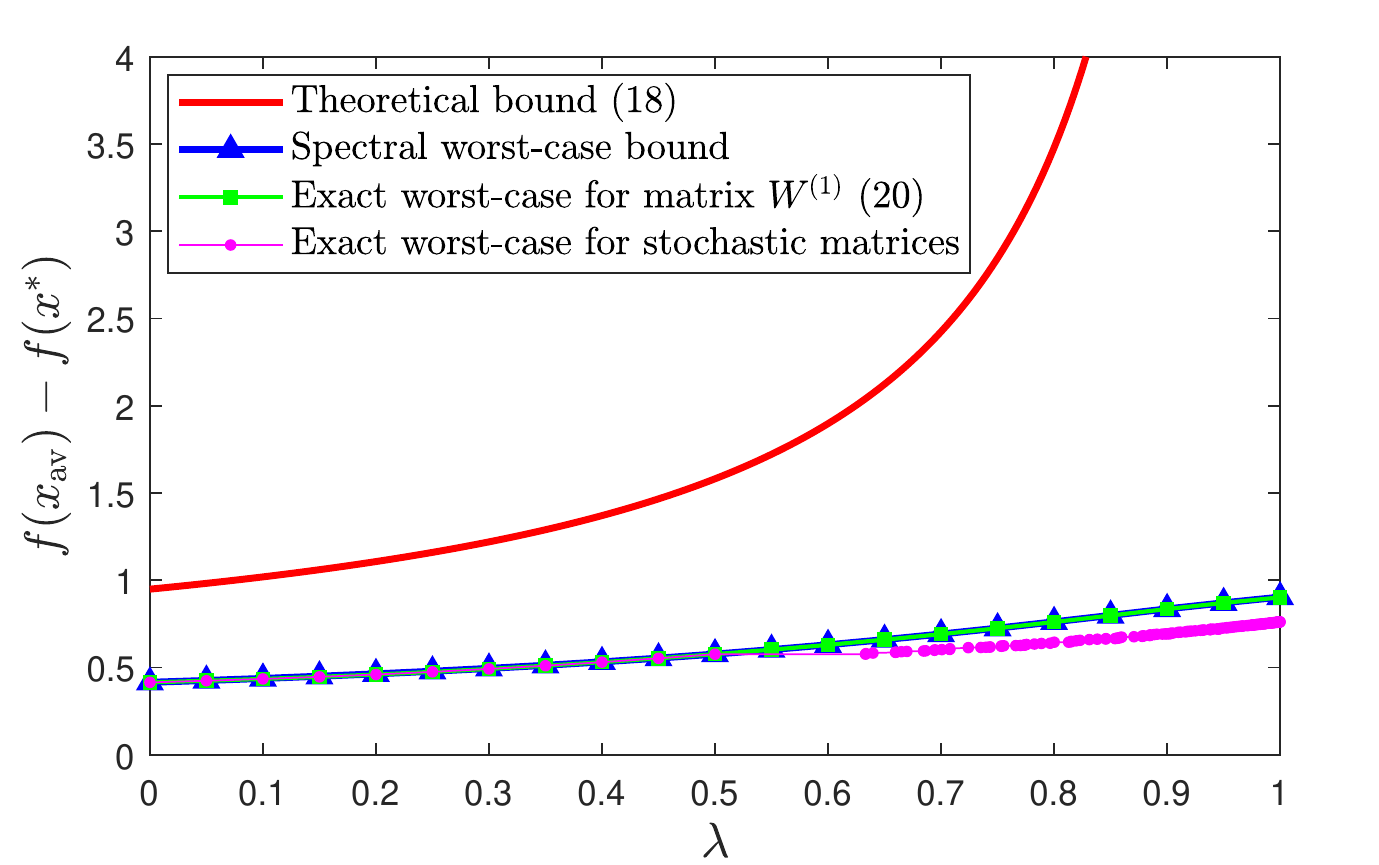}
  \vspace{-3mm}
  \caption{Evolution with $\lam$ of the worst-case performance of $K = 10$ iterations of DGD in the setting of Theorem \ref{thm:bound} with $N = 3$ agents. The plot shows (i) the theoretical bound from equation \eqref{eq:th_bound} (in red), largely above (ii) the spectral worst-case performance (in blue), (iii) the exact worst-case performance for the symmetric generalized doubly stochastic matrix $W^{(1)}$ from equation \eqref{eq:mat} (in green) and (iv) the exact worst-case performance for symmetric doubly stochastic matrices found based on an exhaustive exploration of such matrices used in the exact PEP formulation (in pink).
  This indicates the tightness of the spectral formulation of PEP for DGD with symmetric generalized doubly stochastic matrices, within numerical errors.
  \vspace{-2mm}}
  \label{fig:wc_lamevol_N3}
\end{figure}

The improvements of the bounds when $\lam$ is close to 1 is particularly relevant since large values for $\lam$ are frequent for averaging matrices of large networks of agents \cite{eigenBound}.
For example, for a 5 by 5 grid of agents with Metropolis weights \cite{DGD}, the range of eigenvalues of the resulting averaging matrix is $\qty[-0.92,0.92]$.
In that case, after $K=10$ iterations, our spectral bound guarantees that the performance measure is below $0.85$, compared to 8.2 for the theoretical bound from Theorem \ref{thm:bound}.
This accuracy of $0.85$ would only be guaranteed using Theorem \ref{thm:bound} with $K = 936$. \smallskip

\paragraph{Worst averaging matrix and tightness analysis}
When considering the spectral formulation with a symmetric spectral range $-\lm=\lp=\lam$, we observe that the worst averaging matrices are matrices of the following form \vspace{-0.3mm}
\begin{equation} \label{eq:mat}
  W^{(1)} = J - \lam (I - J), \vspace{-0.3mm}
\end{equation}
where $J = \frac{\mathbf{11^T}}{N}$, i.e. it has all entries equal to $\frac{1}{N}$, and $I$ is the identity matrix. Matrix $W^{(1)}$ is symmetric and generalized doubly stochastic, leading $1$ to be one of its eigenvalues. All its other eigenvalues are equal to $-\lam$.
Matrix $W^{(1)}$ always produces a remainder $\|\Yr - W^{(1)}\Xr\|_F$ close to zero for DGD, but it may not be the only one.  The bound obtained using the exact PEP formulation with this specific matrix $W^{(1)}$ for $K=10$ is plotted in green in Fig. \ref{fig:wc_lamevol_N3} and \emph{exactly} matches the spectral bound in blue, within numerical errors. This means that the spectral formulation provides a \emph{tight performance bound} for DGD with symmetric generalized doubly stochastic matrices, even though it is a relaxation,.
This observation has been confirmed for other values of $K$ (5, 15, and 20). \smallskip

\paragraph{Doubly stochastic versus generalized doubly stochastic}
Since every doubly stochastic matrix is also generalized doubly stochastic, the spectral bound also provides an upper bound on the performance of DGD with symmetric doubly stochastic matrices. This bound remains tight for $\lam \le \frac{1}{N-1}$ because the worst-case matrix $W^{(1)}$ \eqref{eq:mat} we have obtained is non-negative and is therefore doubly stochastic. For $\lam > \frac{1}{N-1}$, this is no longer the case and the analysis is performed by empirically looking for symmetric stochastic averaging matrices leading to the worst performance.
In Fig. \ref{fig:wc_lamevol_N3}, for $N=3$ and $\lam>0.5$, we have generated more than 6000 random symmetric doubly stochastic 3 by 3 matrices. We have analyzed their associated DGD performance using the exact PEP formulation and have only kept those leading to the worst performances.
The resulting pink curve deviates no more than 20\% below the spectral bound. In that case, the spectral bound is thus no longer tight for DGD with doubly stochastic matrices but remains very relevant.
This observation has been confirmed for other values of $K$ and $N$ ($N = 3,5,7$, and $K=10,15$). \smallskip

\paragraph{Evolution with the total number of iterations $K$}
Fig. \ref{fig:wc_Kevol} shows the evolution of the spectral worst-case performance for DGD multiplied by $\sqrt{K}$, for different values of $\lam$. Except when $\lam = 1$, all lines tend to a constant value, meaning that the spectral bound behaves in $\bigO\qty(\frac{1}{\sqrt{K}})$, as the theoretical bound \eqref{eq:th_bound}, but with a much smaller multiplicative constant.
When $\lam = 1$, the line grows linearly and never reaches a constant value. In that case, the worst averaging matrices lead to counterproductive interactions, preventing DGD from working in the worst case. \smallskip

\paragraph{Tuning the step-size $\alpha$}
The PEP methodology allows us to easily tune the parameters of a method.
For example, Fig. \ref{fig:wc_alphevol} shows the evolution of the spectral worst-case performance of DGD with the constant step-size it uses,
\begin{figure}[h!]
  \vspace{-1mm}
  \centering
  \includegraphics[width=0.48\textwidth]{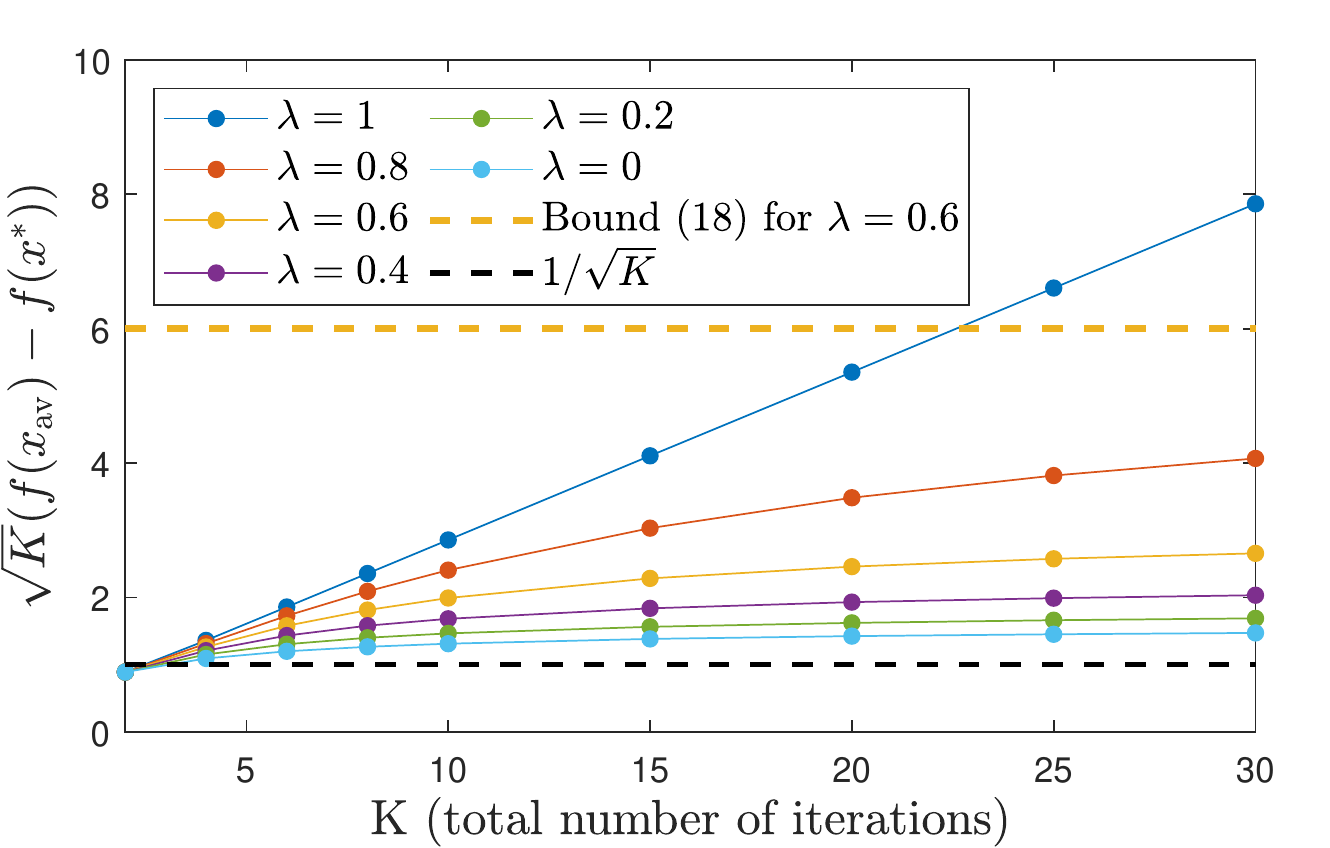}
  \caption{Evolution with $K$ of the \emph{normalized} spectral worst-case performance of $K$ iterations of DGD in the setting of Theorem \ref{thm:bound} with $N = 3$. The shown spectral worst-cases are normalized by $\frac{1}{\sqrt{K}}$ to show that they evolve at this rate.
  \vspace{-4mm}}
  \label{fig:wc_Kevol}
\end{figure}
\begin{figure}[b]
  \vspace{-3mm}
  \centering
  \includegraphics[width=0.48\textwidth]{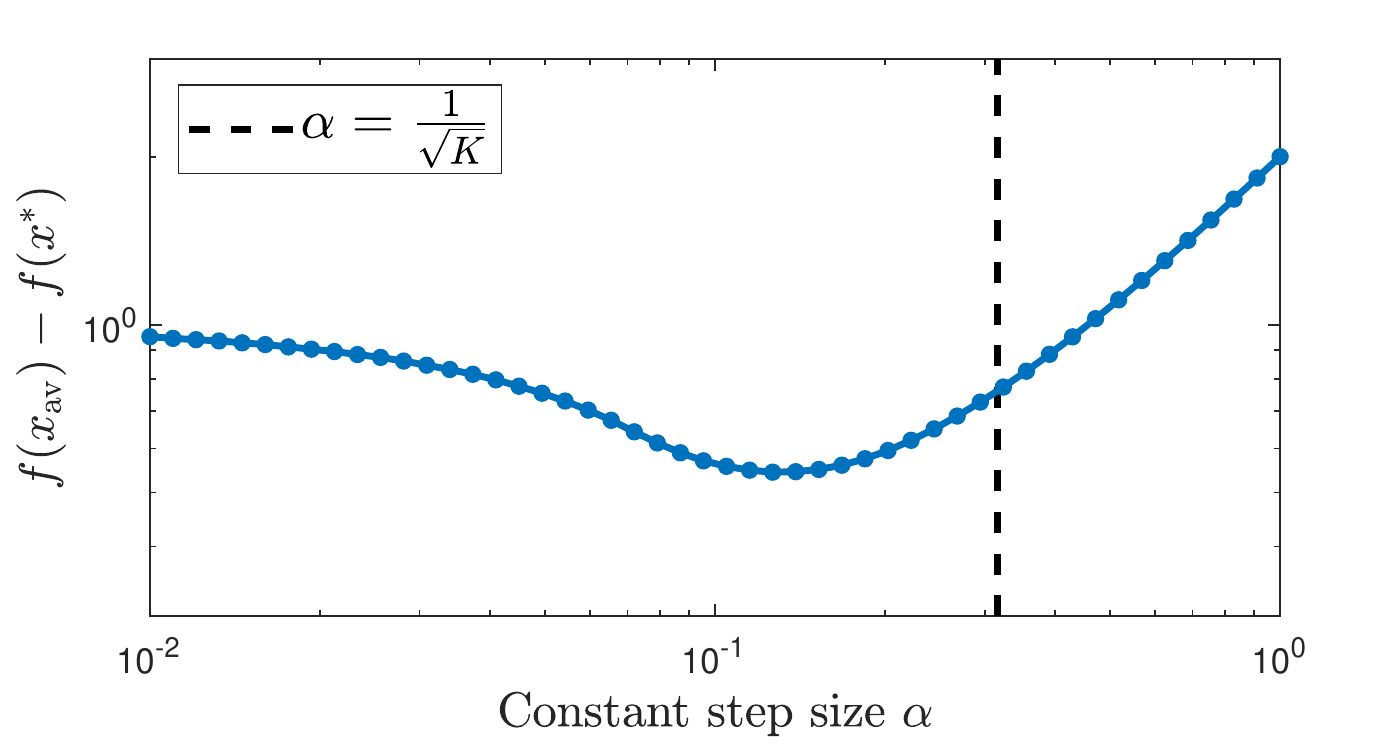}
  \vspace{-1mm}
  \caption{Evolution with $\alpha$ of the spectral worst-case performance of $K = 10$ iterations of DGD in the setting of Theorem \ref{thm:bound} with $N$ = 3 agents and $\lam = 0.8$ (except for $\alpha$).}
  \label{fig:wc_alphevol}
\end{figure}
in the setting of Theorem \ref{thm:bound} with $N=3$, $K=10$ and $\lam = 0.8$.
In that case, we observe that the value $\alpha = \frac{1}{\sqrt{K}}$ used in Theorem \ref{thm:bound} for deriving the theoretical performance bound is not the best possible choice for $\alpha$ and should be divided by two to improve the performance guarantees by 30\%.
The optimal value for $\alpha$, regarding our spectral bound, is the one that provides the best worst-case guarantee,
whatever the averaging matrix from $\Wcl{-\lam}{\lam}$ is used.

The impact of the step-size on the other experiments and observations can be studied by setting $\alpha = \frac{h}{\sqrt{K}}$, for some $h > 0 $. We focused on $h=1$ for comparison with the theoretical bound from Theorem \ref{thm:bound}. Nevertheless, all our other observations have been confirmed\footnote{For too small step-size such as $h \le 0.1$, the worst averaging matrix observed is no more $W^{(1)}$.} for $h = 0.1, 0.5, 2, 10$. \vspace{-5mm}

\subsection{DIGing}
The DIGing algorithm \cite{DIGing}, described in Algorithm \ref{algo:DIGing}, combines DGD with a \emph{gradient tracking} technique. Each agent $i$ holds an estimation $\yDIG_i$ of the average of all the local gradients and uses it in its update instead of its own local gradient.
The DIGing algorithm allows using a different network matrix $W^k$ at each iteration $k$.
\begin{algorithm}
  \caption{DIGing}
  \begin{algorithmic}
  \STATE Choose step-size $\alpha > 0$ and pick any $x_i^0 \in \Rvec{d}$;
  \STATE Initialize $\yDIG^0_i = \nabla f_i(x_i^0)$ for all $i=1,\dots,N$;
  \FOR{$k = 0, 1,\dots$}
  \FOR{$i=1,\dots,N$} \vspace{1mm}
    \STATE  $x_i^{k+1} = \sum_{j=1}^N w_{ij}^k x_j^k - \alpha \yDIG_i^k$;\vspace{1mm}
    \STATE  $\yDIG_i^{k+1} = \sum_{j=1}^N w_{ij}^k  \yDIG_j^k + \nabla f_i(x_i^{k+1}) - \nabla f_i(x_i^k)$; \vspace{1mm}
  \ENDFOR
  \ENDFOR
  \end{algorithmic}
  \label{algo:DIGing}
\end{algorithm}

The linear convergence of DIGing has been established in \cite[Theorem 3.14]{DIGing}
provided that the local functions $f_i$ are $L$-smooth and $\mu$-strongly convex (with $L \ge \mu > 0$); the network matrices are symmetric, doubly stochastic, and have their second largest eigenvalue $\lam$ below 1 (in absolute value); and the step-size is within the interval \vspace{-5mm}
$$\alpha \in \Bigg(0, \frac{(1-\lam)^2}{2L\qty(1+4\sqrt{N}\sqrt{L/\mu})} \Bigg]. \vspace{-5mm} $$
The largest accepted step-size decreases thus as $\bigO(\frac{1}{\sqrt{N}})$ and has also a dependence in $\bigO(\frac{1}{L\sqrt{L/\mu}})$ which is less favorable than the usual $\bigO(\frac{1}{L})$ in optimization, leading thus often to very small values for accepted $\alpha$.
The spectral condition on the network matrices ($|\lam| < 1$) guarantees the network connectivity at each iteration. Actually, \cite{DIGing} imposes a weaker spectral condition which only requires that the union of all the networks over $B$ steps is connected. In this section, we consider the case $B=1$. Under all these conditions, \cite[Theorem 3.14]{DIGing} guarantees the following \mbox{R-linear}\footnote{\label{note:defConv} We recall the definition of the R-linear convergence and its differences with the Q-linear convergence, based on the definitions provided in \cite{DIGing}. \\ Suppose that a sequence $\{x^k\}$ converges to $x^*$ in some norm $\|\cdot\|$.
We say that the convergence is: (i) R-linear if there exists $ \rho \in (0, 1)$ and some positive constant $C$ such that $\|x^k - x^*\| \le C \rho^k$ for all $k$; (ii) Q-linear if there exists $\rho \in (0, 1)$ such that $ \frac{\|x^{k+1} - x^*\|}{\|x^{k} - x^*\|} \le \rho$ for all $k$.
Both convergences are geometric but the Q-linear convergence is stronger since it implies monotonic decrease of $\|x^k - x^*\|$, while R-linear convergence does not. By definition, Q-linear convergence implies R-linear convergence with the same rate but the inverse implication does not hold in general.} convergence:
\begin{equation} \label{eq:DIGing_conv}
  \sqrt{\sum_{i=1}^N \|x_i^K-x^*\|^2} \le C \rho_{\mathrm{theo}}^K, \quad \text{ for any $K \in \mathbb{N}$,}
\end{equation}
where $C$ is a positive constant and $\rho_{\mathrm{theo}} \in \qty(0,1)$ is the convergence rate depending on $N$, $\lam$, $L$ and $\mu$ (see \cite[Theorem 3.14]{DIGing} for details about its expression).

This section analyzes the worst-case performance of DIGing via the exact and spectral formulations, in the same settings as \cite[Theorem 3.14]{DIGing} to get a fair comparison. Therefore, we consider the set of $L$-smooth and $\mu$-strongly convex functions for local functions.
As performance criterion, we consider the same as in \eqref{eq:DIGing_conv} but squared and scaled by $N$: \vspace{-2mm}
\begin{equation} \label{eq:DIGing_crit}
  \mc{P}^K = \frac{1}{N}\sum_{i=1}^N \|x_i^K-x^*\|^2. \vspace{-1mm}
\end{equation}
The corresponding theoretical convergence rate for this criterion is given by $ \rho_{\mathrm{theo}}^2$.
We also consider initial conditions similar to those used implicitly in \cite[Theorem 3.14]{DIGing}:
\begin{align}
  \frac{1}{N}\sum_{i=1}^N\|x_i^0 - x^*\|^2 \le D^2, \label{eq:init_DIGing1} \\
  \frac{1}{N}\sum_{i=1}^N \|\yDIG_i^0 - \frac{1}{N}\sum_{j=1}^N\nabla f_j(x_j^0)\|^2 \le E^2.  \label{eq:init_DIGing2}
\end{align}
Condition \eqref{eq:init_DIGing1} bounds the initial performance criterion $\mc{P}(x^0)$ which measures the average error of the initial iterates $ x_i^0$. Condition \eqref{eq:init_DIGing2} bounds the average error made by the agents on the initial average gradient estimates $\yDIG_i^0$. \\
For the spectral formulation, to have the same setting as \cite[Theorem 3.14]{DIGing}, we consider time-varying averaging matrices that are symmetric, generalized doubly stochastic, and with a symmetric range of eigenvalues $[-\lam, \lam]$, i.e. in $\Wcl{-\lam}{\lam}$. In a second time, we will also consider constant network matrices.

The problem depends on 6 parameters: $L$, $\mu$, $D$, $E$, $\lam$, $\alpha$. We fix $L=1$ and $D=1$, as the results can then be scaled up to general values using appropriate changes of variables. The value of $E$ is arbitrarily fixed to $E = 1$, but all the observations have been confirmed for other values of $E$ ($E = 0.1, 10$). Different values of the step-size $\alpha$ will be analyzed to understand its impact on the worst-case performance of the DIGing algorithm. We show the results for representative values of $\mu$ and $\lam$ ($\mu = 0.1$ and $\lam = 0.9$).  \smallskip

\paragraph{Impact of the number of agents $N$}
As it was the case for DGD (see Section \ref{sec:analysis_DGD}), we have observed that the spectral worst-case of DIGing is independent of the number of agents $N$ (for $N \ge 2$). This differs from the theoretical analysis of DIGing from \cite[Theorem 3.14]{DIGing} for which the range of accepted step-sizes, as well as the convergence rate, depend on $N$. It appears that the worst-case performance of DIGing does not get worst when $N$ increases, or can at least be bounded uniformly for all values of $N$, for example with our spectral bound. Such uniform bound will allow us to better choose the step-size $\alpha$ of DIGing, identically for all $N$.
For the subsequent analysis of the results of our spectral PEP formulation for DIGing, we fix $N=2$ for keeping a low computational complexity. This also corresponds to the most favorable situation for the theoretical results \cite[Theorem 3.14]{DIGing}, which get worse as $N$ increases. \smallskip

\begin{figure}[b]
  \vspace{-3mm}
  \includegraphics[width=0.5\textwidth]{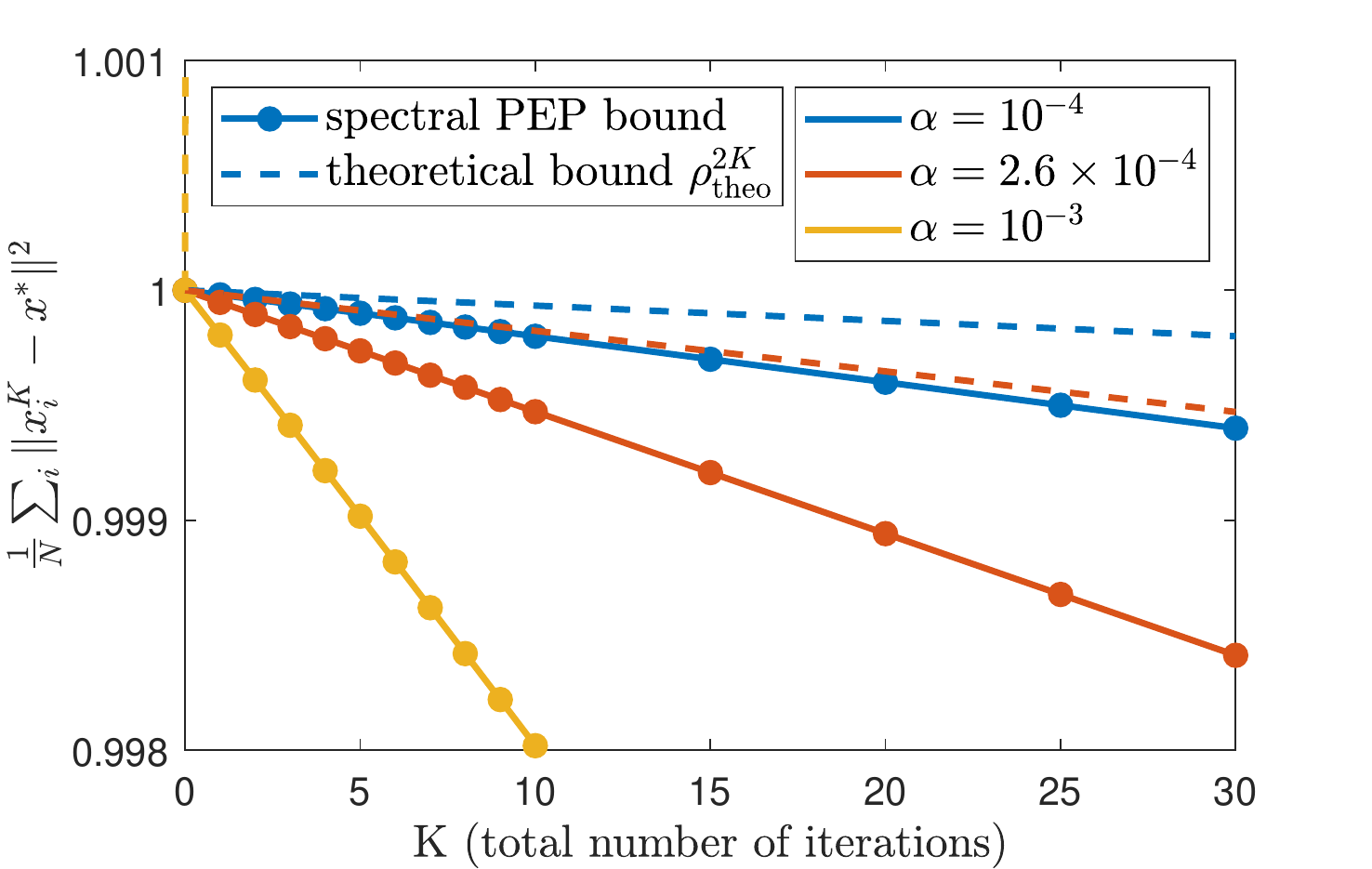}
  \caption{Evolution with K of the spectral worst-case of K iterations of DIGing with $N=2$, $\lam = 0.9$, $\mu = 0.1$, and different values of step-size $\alpha$. The corresponding theoretical rates from \cite[Theorem 3.14]{DIGing} are also shown in comparison. Logarithmic y-axis.}
  \label{fig:DIGing_Kevol}
\end{figure}

\paragraph{Comparison between the spectral and theoretical bounds}
We compare the spectral bounds obtained with PEP with the corresponding guarantees obtained using $\rho_{\mathrm{theo}}^2$, i.e. the square of the theoretical convergence rate bound \cite[Theorem 3.14]{DIGing}. Fig. \ref{fig:DIGing_Kevol} shows the evolution of both guarantees (spectral and theoretical) with the total number of iterations $K$ of the algorithm, for different values of the step-size $\alpha$ and for $N=2$, $\lam = 0.9$ and $\mu = 0.1$. The spectral bounds are always smaller than the corresponding theoretical ones.

For the three values of $\alpha$, we observe a linear decrease of the spectral worst-cases, which strongly suggests a linear convergence rate. The observed rates are listed in TABLE \ref{tab:rates_DIG} below and can be compared with the theoretical convergence rate $\rho_{\mathrm{theo}}^2$, which are all larger. The step-size $\alpha = 2.6 \times 10^{-4}$ is the one that optimizes the theoretical convergence guarantee $\rho_{\mathrm{theo}}$ from \cite[Theorem 3.14]{DIGing} for $N=2$.
For some step-sizes, such as $\alpha = 10^{-3}$, convergence is not guaranteed by \cite[Theorem 3.14]{DIGing}, even when $N=2$, while our observations suggest that it does occur. And this get worse when $N$ becomes larger since it would require the theoretical step-sizes to be smaller, i.e. $\alpha_{\max} \approx  \frac{4 \times 10^{-4}}{\sqrt{N}}$.
Therefore, the spectral bound for DIGing can help to greatly improve the choice of the step-size.

The same observations can be done for other values of $\mu$ and $\lam$. In particular, other values of $\mu$ leads to the same graphs, with a different scale for the vertical axis. We test it for $\mu = 0.01$ and $\mu = 0.001$.

\begin{table}[h]
  \vspace{1mm}
\centering
\captionsetup{width=.9\linewidth}
\begin{tabular}{|r|c|c|}
  \hline && \\[-2mm]
step-size $\alpha$  & $1- \text{observed rate}$ & $1-\text{theoretical rate}$ \\[1mm] \hline && \\[-2.5mm]
$\hspace{2.2mm} 10^{-4}$   & $2 \times 10^{-5}$ & $7\times 10^{-6}$ \\
$2.6 \times 10^{-4}$ & $5 \times 10^{-5}$ & $2\times 10^{-5}$ \\
$\hspace{2.2mm} 10^{-3}$   & $2 \times 10^{-4}$ &  / \\ \hline
\end{tabular}
\caption{Theoretical \cite{DIGing} and observed spectral rates for $N=2$, $\lam = 0.9$, $\mu = 0.1$ and for different step-sizes $\alpha$.\vspace{-4mm}}
\label{tab:rates_DIG}
\end{table}

\paragraph{Convergence rate analysis with PEP}
Fig. \ref{fig:DIGing_Kevol} strongly suggests a linear convergence rate but the performance guarantees only hold for the values of $K$ tested and we cannot extrapolate them with certainty, e.g. the performance could explode after a larger number of iterations.
We now show how to use our PEP formulations to obtain a \emph{guaranteed convergence rate} valid for any number of iterations. The idea is to use the same metric as an initial condition and performance measure to be able to consider the problem over only one general DIGing iteration. We also need to ensure that the DIGing update preserved the assumptions on initial conditions.
The metric we use is the weighted combination of the two error measures \eqref{eq:init_DIGing1} and \eqref{eq:init_DIGing2} previously used separately in the initialization: \vspace{-4mm}

{\small
\begin{align}
   \mc{P}_\betac^K = \frac{1}{N}\sum_{i=1}^N \|x_i^K-x^*\|^2 + \frac{\betac}{N}\sum_{i=1}^N \|\yDIG_i^K- \frac{1}{N} \sum_{j=1}^N \nabla f_j(x_j^K)\|^2, \label{eq:rate_metric}\\[-10mm]
\end{align}
\normalsize}
where $\betac$ is a positive weighting coefficient. \smallskip

\begin{proposition}[Convergence rate of DIGing with PEP] \label{thm:dig_conv}
  Consider the one iteration spectral PEP formulation of DIGing with $\mc{P}_\betac^1$ as performance criterion and with the following initialization: pick any $x_i^0, s_i^0 \in \Rvec{d}$ such that \vspace{-1mm}
  \begin{align}
    \sum_{i=1}^N \yDIG_i^0 &= \sum_{i=1}^N \nabla f_i(x_i^0) \qquad \text{ and } & \mc{P}_\betac^0 &= 1. \label{eq:init_rate} \vspace{-1mm}
  \end{align}
  Let $\theta_\betac$ be the optimal value of this PEP, then
  \begin{equation}
    \mc{P}^k \le \mc{P}_\betac^k \le \theta_\betac^k \mc{P}_\betac^0 \qquad \text{for any $k$, $\betac \ge 0$.} \vspace{-1mm}
  \end{equation}
  Convergence is Q-linear\footnote{See definition of Q-linear and R-linear convergence in footnote \ref{note:defConv}} for $\mc{P}_\betac^k$ \eqref{eq:rate_metric} and R-linear for $\mc{P}^k$\,\eqref{eq:DIGing_crit}, both with convergence rate $\theta_\betac$ depending on coefficient $\betac$.
\end{proposition}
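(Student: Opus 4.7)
The plan is to prove the one-step contraction $\mc{P}_\betac^{k+1} \le \theta_\betac\, \mc{P}_\betac^k$ for every $k$ and then iterate. Two ingredients are needed: first, the identity $\sum_i \yDIG_i^k = \sum_i \nabla f_i(x_i^k)$ must be preserved along the DIGing trajectory, so that every iterate can play the role of a valid ``initial state'' for the PEP defined in the statement; second, the PEP must be homogeneous of degree two in its data, so that a state with arbitrary value of $\mc{P}_\betac^k$ can be rescaled to the normalization $\mc{P}_\betac^0 = 1$ and the PEP bound applied.

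The invariance is obtained by induction on $k$. Summing the $\yDIG$-update of Algorithm \ref{algo:DIGing} over $i$ and using that $W^k$ is generalized doubly stochastic (so $\sum_i w_{ij}^k = 1$) gives
\begin{equation*}
\sum_{i=1}^N \yDIG_i^{k+1} = \sum_{j=1}^N \yDIG_j^k + \sum_{i=1}^N \bigl(\nabla f_i(x_i^{k+1}) - \nabla f_i(x_i^k)\bigr),
\end{equation*}
so the identity persists whenever it holds at step $k$, and the base case is the first equation of \eqref{eq:init_rate}.

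For the scaling step, given any reachable DIGing state at iteration $k$ with $s := \mc{P}_\betac^k > 0$, I would introduce $\tilde x_i := x_i^k/\sqrt{s}$, $\tilde \yDIG_i := \yDIG_i^k/\sqrt{s}$, and $\tilde f_i(x) := f_i(\sqrt{s}\, x)/s$. A direct computation shows that each $\tilde f_i$ is still $L$-smooth and $\mu$-strongly convex, that its minimizer becomes $x^*/\sqrt{s}$, that the invariance $\sum_i \tilde \yDIG_i = \sum_i \nabla \tilde f_i(\tilde x_i)$ is inherited from the previous step, and that $\tilde{\mc{P}}_\betac^0 = \mc{P}_\betac^k / s = 1$. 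Since the DIGing update is linear in $(x,\yDIG,\nabla f)$, one step of the algorithm with the same $\alpha$ and $W^k$ applied to the rescaled data produces $(x_i^{k+1}/\sqrt{s},\,\yDIG_i^{k+1}/\sqrt{s})$, so $\tilde{\mc{P}}_\betac^1 = \mc{P}_\betac^{k+1}/s$. Applying the PEP bound to the rescaled problem gives $\tilde{\mc{P}}_\betac^1 \le \theta_\betac$, hence $\mc{P}_\betac^{k+1} \le \theta_\betac\, \mc{P}_\betac^k$. Iterating yields $\mc{P}_\betac^k \le \theta_\betac^k \mc{P}_\betac^0$, which is Q-linear convergence of $\mc{P}_\betac^k$; and since $\mc{P}^k \le \mc{P}_\betac^k$ whenever $\betac \ge 0$, the same geometric bound transfers to $\mc{P}^k$, giving R-linear convergence at the same rate. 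The main obstacle is the homogeneity step itself: one must check carefully that the rescaling leaves both the function class and the algorithmic update invariant (up to the overall factor $\sqrt{s}$), and that $\mc{P}_\betac$ scales exactly as $s$, so that the single-iteration PEP value genuinely bounds the one-step ratio at every iteration.
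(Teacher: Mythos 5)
Your proposal is correct and follows essentially the same route as the paper: a degree-two homogeneity/rescaling argument reducing each iteration to the normalized one-step PEP, combined with the inductive preservation of $\sum_i \yDIG_i^k = \sum_i \nabla f_i(x_i^k)$ under column-stochastic averaging, and finally $\mc{P}^k \le \mc{P}_\betac^k$ for $\betac \ge 0$. The only cosmetic difference is that you rescale the current state down to $\mc{P}_\betac = 1$ while the paper scales the normalized instance up by a factor $M$; your version is slightly more explicit about why the rescaled instance remains a feasible instance of the PEP (function class, minimizer, and update all invariant).
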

\begin{proof}
  One can verify that the following changes of variables, using a coefficient $M \ge 0$,
  $$\tilde{x}_i = \sqrt{M} x_i, \quad \tilde{s}_i = \sqrt{M} s_i \quad \text{ and } \quad \tilde{f}_i(\tilde{x}_i) = M f_i(x_i),$$
  do not affect the behavior of DIGing and scale both $\mc{P}_\betac^0$ and $\mc{P}_\betac^1$ by a factor $M$:
  $$\tilde{\mc{P}}_\betac^0 = M \mc{P}_\betac^0, \qquad \tilde{\mc{P}}_\betac^1 = M \mc{P}_\betac^1.$$
  Since $\theta_\betac$ is the optimal value of $\mc{P}_\betac^1$ and $M = \tilde{\mc{P}}_\betac^0$ (for $\mc{P}_\betac^0 = 1$), we have that \vspace{-1mm}
  \begin{equation} \label{eq:scaling_ineq}
    \tilde{\mc{P}}_\betac^1 \le \theta_\betac \tilde{\mc{P}}_\betac^0.
  \end{equation}
  Equation \eqref{eq:scaling_ineq} holds for any value of $\tilde{\mc{P}}_\betac^0 \ge 0$ (e.g. for $\mc{P}_\betac^k$). Moreover, the iterations of DIGing are independent of $k$, and thus, inequality \eqref{eq:scaling_ineq} is valid for any iteration $k$
  \begin{equation} \label{eq:scaling_ineqk}
    \mc{P}_\betac^{k+1} \le \theta_\betac \mc{P}_\betac^k,
  \end{equation}
  provided that iterates $x^k_i, s_i^k$ also satisfy the initial condition
  \begin{align}
    \sum_{i=1}^N \yDIG_i^k &= \sum_{i=1}^N \nabla f_i(x_i^k), & \text{for any $k$}. \label{eq:dig_proof_rec2}
  \end{align}
  This condition \eqref{eq:dig_proof_rec2} holds by assumption for $k=0$ and is preserved by a DIGing update with a stochastic matrix $W$ (see Algorithm \ref{algo:DIGing}), as \vspace{-1mm}
  \small
  \begin{align}
    \sum_{i=1}^N \yDIG_i^{k+1} &= \sum_{i=1}^N \sum_{j=1}^N w_{ij}^k  \yDIG_j^k + \sum_{i=1}^N \nabla f_i(x_i^{k+1}) -  \sum_{i=1}^N \nabla f_i(x_i^k) \\
    & =
    \sum_{j=1}^N  \yDIG_j^k + \sum_{i=1}^N \nabla f_i(x_i^{k+1}) -  \sum_{i=1}^N \nabla f_i(x_i^k) \\
    &= \sum_{i=1}^N \nabla f_i(x_i^{k+1}), \vspace{-1mm}
  \end{align}
  \normalsize
  Finally, by definition of $\mc{P}^k$ (see \eqref{eq:DIGing_crit}) and $\mc{P}_\betac^k$, we have well that $\mc{P}^k \le \mc{P}_\betac^k$ for any $k$, $\betac \ge 0$.
\end{proof}

Using Proposition \ref{thm:dig_conv}, we can obtain guaranteed convergence rates for DIGing, which depend on the weighting coefficient $\betac$, for the metric $\mc{P}_\betac^K$.
These convergence rates are also valid for $\mc{P}^K$, for all $\betac \ge 0$, and can thus be compared with the observed rates from Fig. \ref{fig:DIGing_Kevol} and the theoretical convergence rates \eqref{eq:DIGing_conv} \mbox{(\cite[Theorem 3.14]{DIGing})}.
An exploration of the different values for the weighting coefficient $\betac \ge 0$ suggests that the best rates are obtained for $\betac = \frac{\alpha}{L}$ but other rates are also valid.
With $\betac=\frac{\alpha}{L}$, we recover exactly the same rates as those observed in Fig. \ref{fig:DIGing_Kevol}, but they are guaranteed with certainty for any number of iterations $K$. The PEP problem from Proposition \ref{thm:dig_conv} has a small size since it only considers one iteration, and is thus rapidly solved. The size of the problem still increases with the number of agents $N$ but once again, we observe that the results are independent of $N$.

The approach above can be applied to other algorithms provided that their updates are independent of each other.
It presents some parallels with the approach used in the automatic analysis with IQC \cite{IQC_dec}. Both approaches analyze the decrease of a particular function over only one iteration. We design the decreasing criterion $\mc{P}_\betac$ by hand and have optimized the value of $\betac$ to find the smallest rate. The IQC approach allows to easily optimize the rate over a wider class of Lyapunov functions and it may therefore give smaller rates. On the other hand, our PEP approach provides the worst functions and communication networks resulting from the worst-case solutions. It also allows comparing what happens over one iteration and over several, and with network matrices that are variable or constant in time. \smallskip

\begin{figure}[t]
  \vspace{-3mm}
  \centering
  \includegraphics[width=0.5\textwidth]{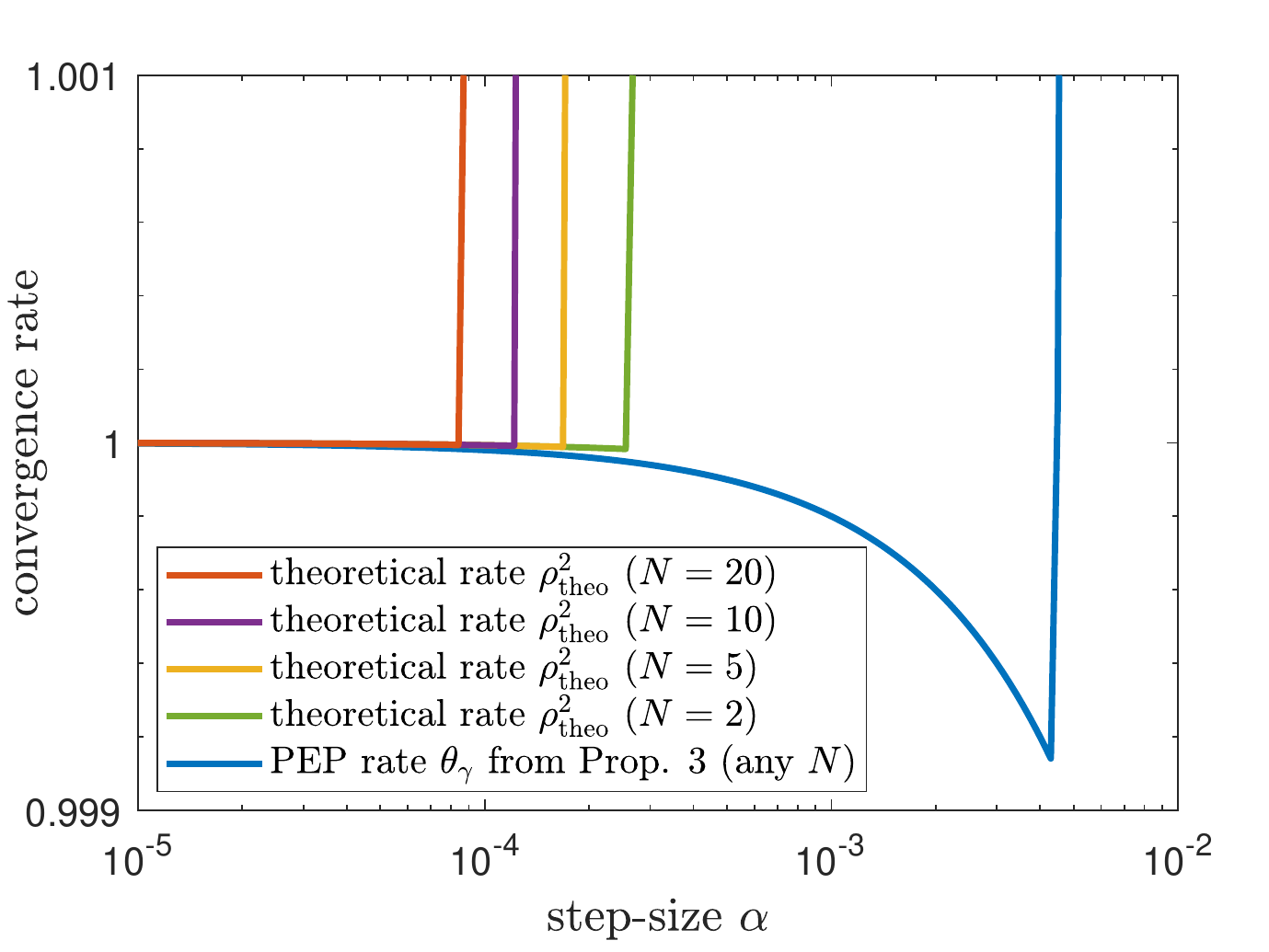}
  \caption{Convergence rate evolution with the step-size $\alpha$ for DIGing with $\lam=0.9$ and $\mu=0.1$. Theoretical rates from \cite{DIGing} are shown for different values of $N$. PEP rates $\theta_\betac$, obtained with Proposition \ref{thm:dig_conv} for $\betac = \frac{\alpha}{L}$, are lower and allow for larger step-sizes. These rates are computed with $N=2$ but the results are identical for any value of $N$. \vspace{-4mm}}
  \label{fig:DIGing_alpha_evol}
\end{figure}

\paragraph{Impact of the step-size $\alpha$}
Fig. \ref{fig:DIGing_alpha_evol} compares the spectral rates, obtained with the spectral PEP formulation from Proposition \ref{thm:dig_conv} with the theoretical ones from \cite[Theorem 3.14]{DIGing} for a wide range of values for the step-size $\alpha$.  The spectral rates are identical for any value of $N$ and present a first regime where they decrease as $\alpha$ increases until a certain threshold step-size $\alpha_t$. This decrease is numerically close to $1-2\mu\alpha$. After $\alpha_t$, we observe a sharp increase in the spectral rates.
Since the theoretical results \cite{DIGing} depends on the value of $N$, Fig. \ref{fig:DIGing_alpha_evol} shows different curves, corresponding to theoretical bounds on the convergence rates for different numbers of agents $N$. All these curves also present two regimes, however, the decrease of the first regime is slower and the sharp increase takes place after a much lower threshold step-size. Moreover, both the sharp increase and the threshold step-size worsen as $N$ increases. \\
Therefore, the spectral rates obtained  with PEP allow for significant improvements in the tuning of the step-size $\alpha$ by choosing a larger value ($\approx \alpha_t$), which is independent of $N$. This leads to better convergence guarantees and therefore to better use of DIGing.
For example, in the setting of Fig. \ref{fig:DIGing_alpha_evol}, when $N=20$, the theoretical bound requires a step-size below $10^{-4}$, while the optimal step-size according to our spectral rate is around $4\times 10^{-3}$. This choice of the step-size allows improving the convergence guarantee by at least 2 orders of magnitude. \\
We observed the same two regimes for all the other values of $\mu$ and $\lambda$ we tested. However, the smaller the value of $\lambda$ is, the larger the gap between the values of the theoretical and spectral convergence rate at the threshold step-size. \smallskip

\paragraph{Impact of time-varying averaging matrices} \label{par:var_comp}
In the spectral formulation, we can choose to link different iterations together and to analyze the worst-case when we use the same constant averaging matrix $W$ at each iteration. We can also choose to consider each consensus step independently with potentially time-varying averaging matrices. For DIGing, we have chosen the second option to allow time-varying averaging matrices and be in the same condition as
\cite{DIGing}. However, we observe that both choices lead to the same worst-case values, even though the solution achieving these worst-cases may be different. One worst-case solution obtained with a constant matrix is therefore also a worst-case solution of the situation with time-varying matrices. We can thus analyze what is the worst constant matrix for DIGing and it will also be valid for time-varying settings. \smallskip

\paragraph{Worst averaging matrix and tightness analysis}
When the step-size $\alpha$ is optimized, i.e., equal to the threshold value ($\alpha_t$), we observe that the worst matrix for DIGing is the same as for DGD, and is thus given by $W^{(1)}$ in \eqref{eq:mat}. This matrix is only determined by the value of $\lambda$ and $N$ and the remainder it produces $\|\Yr - W^{(1)}\Xr\|$ is always close to zero in that case. This matrix $W^{(1)}$ is symmetric and generalized doubly stochastic. The same worst matrix is also recovered for larger step-size $\alpha$.
The bounds obtained using the exact PEP formulation with this specific matrix $W^{(1)}$ \emph{exactly} match the corresponding spectral bounds, within numerical errors. This means that the spectral formulation, even though it is a relaxation, provides again a \emph{tight performance bound} for DIGing with symmetric generalized doubly stochastic matrices and sufficiently large step-size. \smallskip

In summary, our spectral PEP formulation provides numerically tight convergence rates for DIGing that are independent of the number of agents $N$, and allows for better tuning of the constant step-size $\alpha$, leading to more efficient use of the DIGing algorithm.

\subsection{Accelerated Distributed Nesterov Gradient Descent}
As third use case, we analyze the accelerated distributed Nesterov
gradient descent (Acc-DNGD) algorithm proposed in \cite{AccDNGD}. We focus on the version designed for convex (not necessarily strongly convex) and $L$-smooth functions, described in Algorithm \ref{algo:ADNGD}.
\begin{algorithm}[b]
  \caption{Acc-DNGD}
  \begin{algorithmic}
  \STATE Initialize $x_i^0 = v_i^0 = y_i^0 = 0$ and $s_i^0 = \nabla f(0)$ for all $i$;
  \FOR{$k = 0, 1,\dots$}
  \FOR{$i = 1,\dots,N$} \vspace{1mm}
    \STATE  $x_i^{k+1} = \sum_{j=1}^N w_{ij} y_j^k - \etaADNGD_k s_i^k$; \vspace{1mm}
    \STATE  $v_i^{k+1} = \sum_{j=1}^N w_{ij} v_j^k - \frac{\etaADNGD_k}{\alphaADNGD_k} s_i^k$; \vspace{1mm}
    \STATE  $y_i^{k+1} = \alphaADNGD_{k+1} x_i^{k+1} + (1-\alphaADNGD_{k+1})v_i^{k+1}$; \vspace{1mm}
    \STATE $s_i^{k+1} = \sum_{j=1}^N w_{ij} s_j^k + \nabla f_i(y_i^{k+1}) - \nabla f_i(y_i^k)$; \vspace{1mm}
  \ENDFOR
  \ENDFOR
  \end{algorithmic}
  \label{algo:ADNGD}
\end{algorithm}
It achieves one of the best proved convergence rate in such setting, $\bigO\qty(\frac{1}{K^{1.4-\epsilon}})$ for any $\epsilon \in \qty(0,1.4)$, but there remains several open questions on choice of parameters and actual performance. We show how our technique allows shedding light on these questions.
We use the notations of \cite{AccDNGD}, which are slightly different from the rest of this paper. Here, $\eta_k$ denotes the diminishing step-size and $\alpha_k$ denotes a weighting factor.
Each agent $i$ keeps variables $x_i$, $v_i$, $y_i$, and $s_i$. The variables $s_i$ are local gradient tracking variables allowing each agent to estimate the average gradient $\frac{1}{N} \sum_{i=1}^N \nabla f_i(y_i)$.
The step-size $\etaADNGD_k$ is diminishing as
\begin{equation} \label{eq:accDNGD-sz}
  \etaADNGD_k = \frac{\etaADNGD}{(k+k_0)^\beta},
\end{equation}
where $\etaADNGD \in \qty(0,\frac{1}{L})$, $\beta \in  \qty(0,2)$ and $k_0 \ge 1$.
The sequence of $\alphaADNGD_k$ starts with
$\alphaADNGD_0 = \sqrt{\etaADNGD_0 L} \in \qty(0,1)$ and the next element of the sequence is each time computed as the unique solution in $(0,1)$ of
$$\alphaADNGD_{k+1}^2 = \frac{\etaADNGD_{k+1}}{\etaADNGD_k}(1-\alphaADNGD_{k+1})\alphaADNGD_k^2.$$
The convergence result \cite[Theorem 4]{AccDNGD} guarantees that the algorithm achieves an average functional error bounded as
$$f(\xb^k) - f(x^*) \le \bigO(\frac{1}{k^{2-\beta}}) \qquad \text{ for $\beta \in (0.6,2)$}.$$ Recall that $f(x) = \frac{1}{N} \sum_{i=1}^N f_i(x)$, $\xb^k = \frac{1}{N} \sum_{i=1}^N x_i^k$
and $x^*$ is a minimizer of $f$.
This convergence guarantee for Acc-DNGD only holds under specific conditions concerning the values of $\etaADNGD$ and $k_0$ \cite[Theorem 4]{AccDNGD}. These assumptions seem strong since they impose, in particular, that $\etaADNGD$ tends to 0 and $k_0$ tends to $\infty$ both when $\lam$ tends to 1 (disconnected graph) and to 0 (fully connected graph).
The authors of \cite{AccDNGD} conjecture that
\begin{enumerate}[(i)]
  \item Exact values of parameters $\etaADNGD$, $k_0$ do not actually matter and we can choose values that are not satisfying assumptions from \cite[Theorem 4]{AccDNGD}, and still obtain a rate of $\bigO(\frac{1}{k^{2-\beta}})$. For example, authors of \cite{AccDNGD} used $\etaADNGD = \frac{1}{2L}$, $k_0 = 1$ in their numerical experiments with $\beta = 0.61$.
  \item Choosing $\beta \in \qty[0,0.6]$ leads to the same rate $\bigO(\frac{1}{k^{2-\beta}})$. The case $\beta = 0$ uses constant step-size $\eta$ and is important because it would lead to a rate $\bigO(\frac{1}{k^{2}})$, i.e. the best possible rate in centralized framework for similar settings \cite[Theorem 2.1.7]{Nesterov}.
\end{enumerate}

\begin{figure*}[h]
\centering \hspace{-3mm}
    \begin{subfigure}{0.5\textwidth}
        \includegraphics[width=\textwidth]{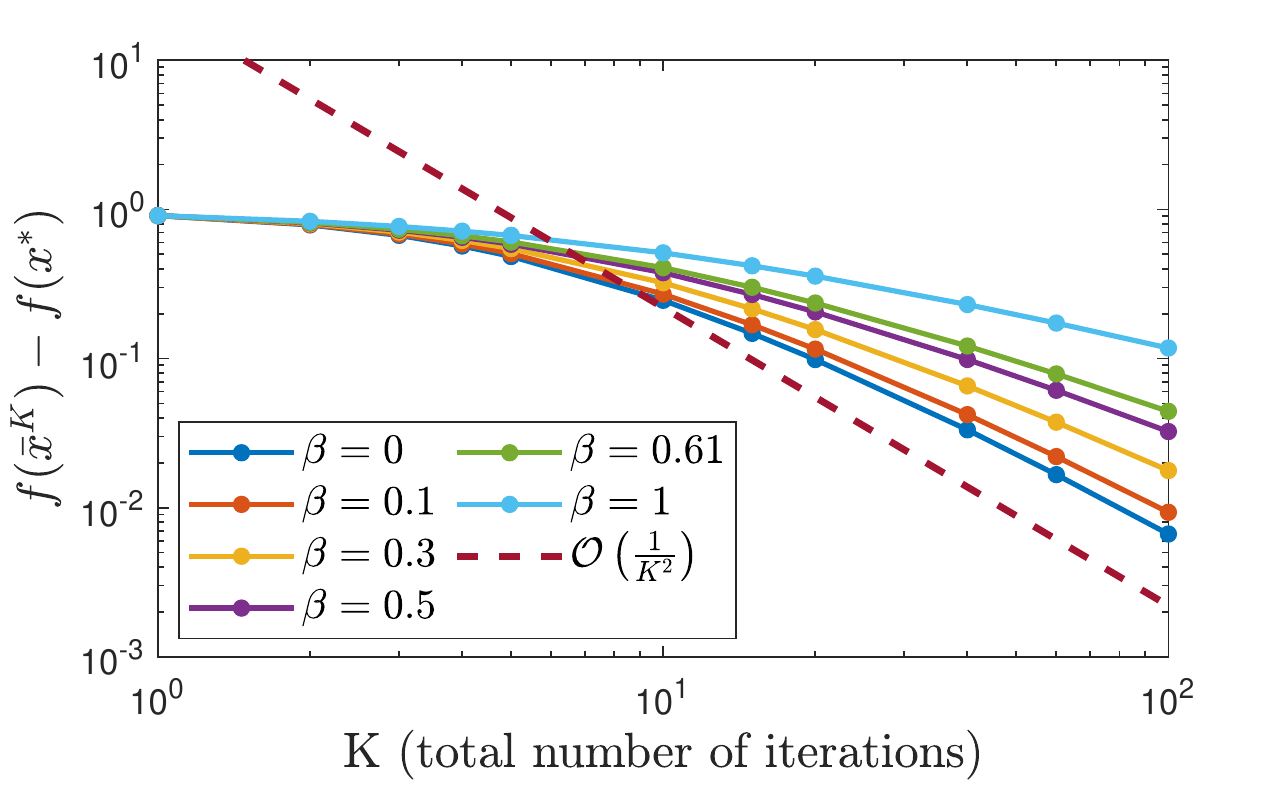}
        \caption{$\etaADNGD = 0.05$, $\lam = 0$}
        \label{fig:ADNGD_05_0}
    \end{subfigure} \hspace{-3mm}
    \begin{subfigure}{0.5\textwidth}
      \includegraphics[width=\textwidth]{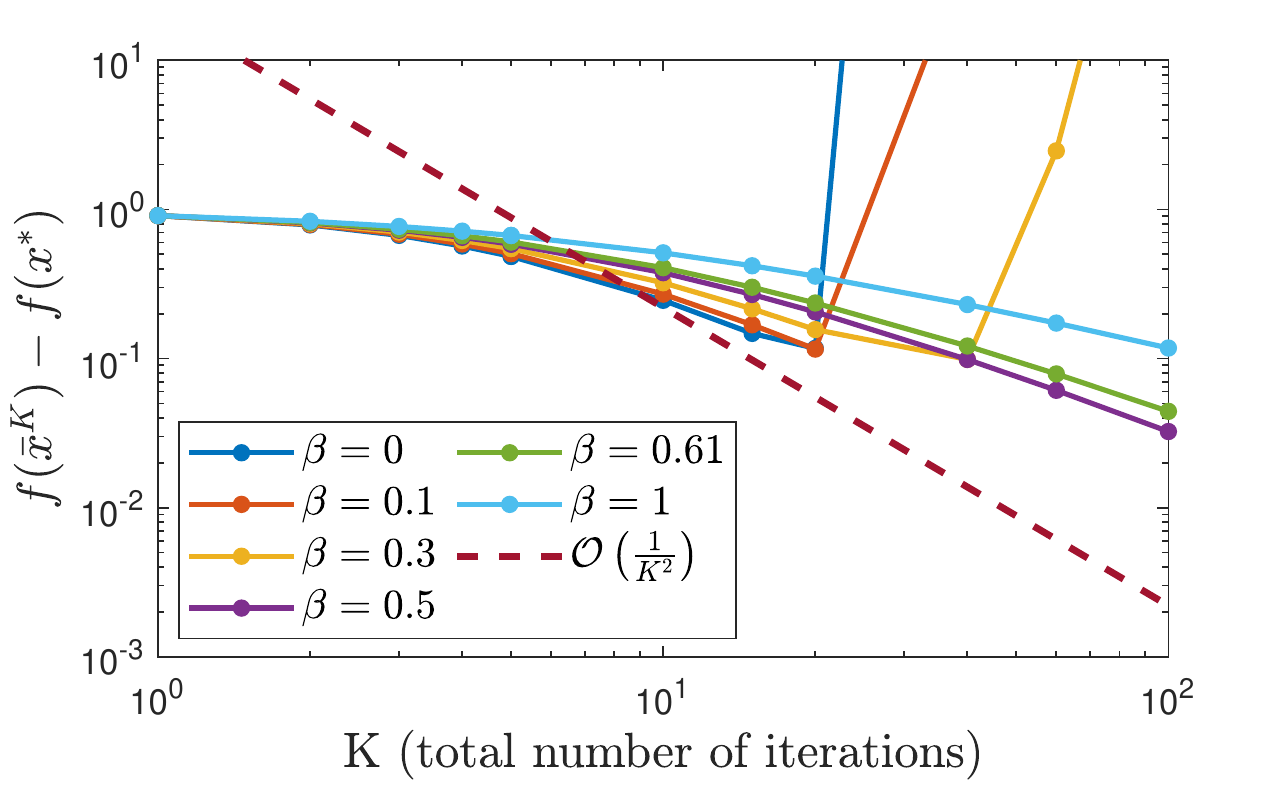}
        \caption{$\etaADNGD = 0.05$, $\lam = 0.75$}
        \label{fig:ADNGD_05_75}
    \end{subfigure}
    \\ \hspace{-3mm}
    \begin{subfigure}{0.5\textwidth}
        \includegraphics[width=\textwidth]{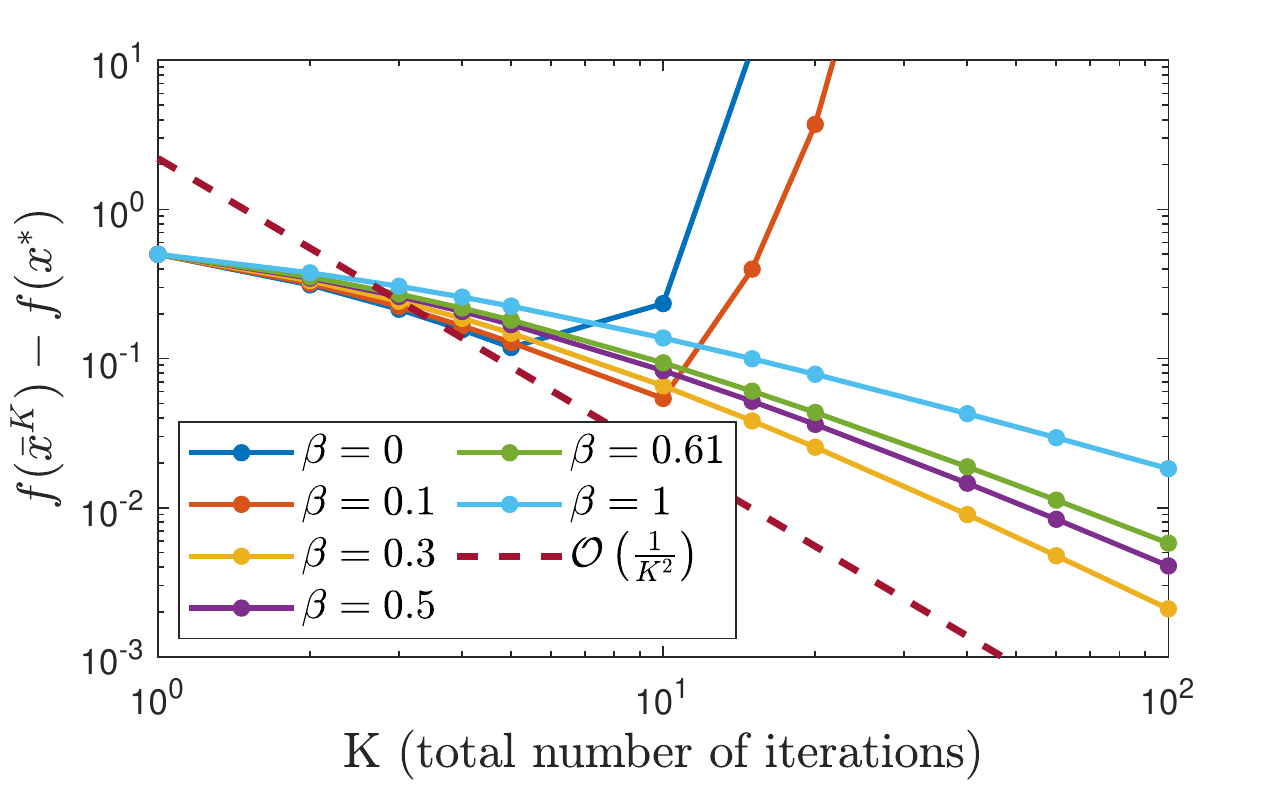}
        \caption{$\etaADNGD = 0.5$, $\lam = 0$}
        \label{fig:ADNGD_5_0}
    \end{subfigure} \hspace{-3mm}
    \begin{subfigure}{0.5\textwidth}
      \includegraphics[width=\textwidth]{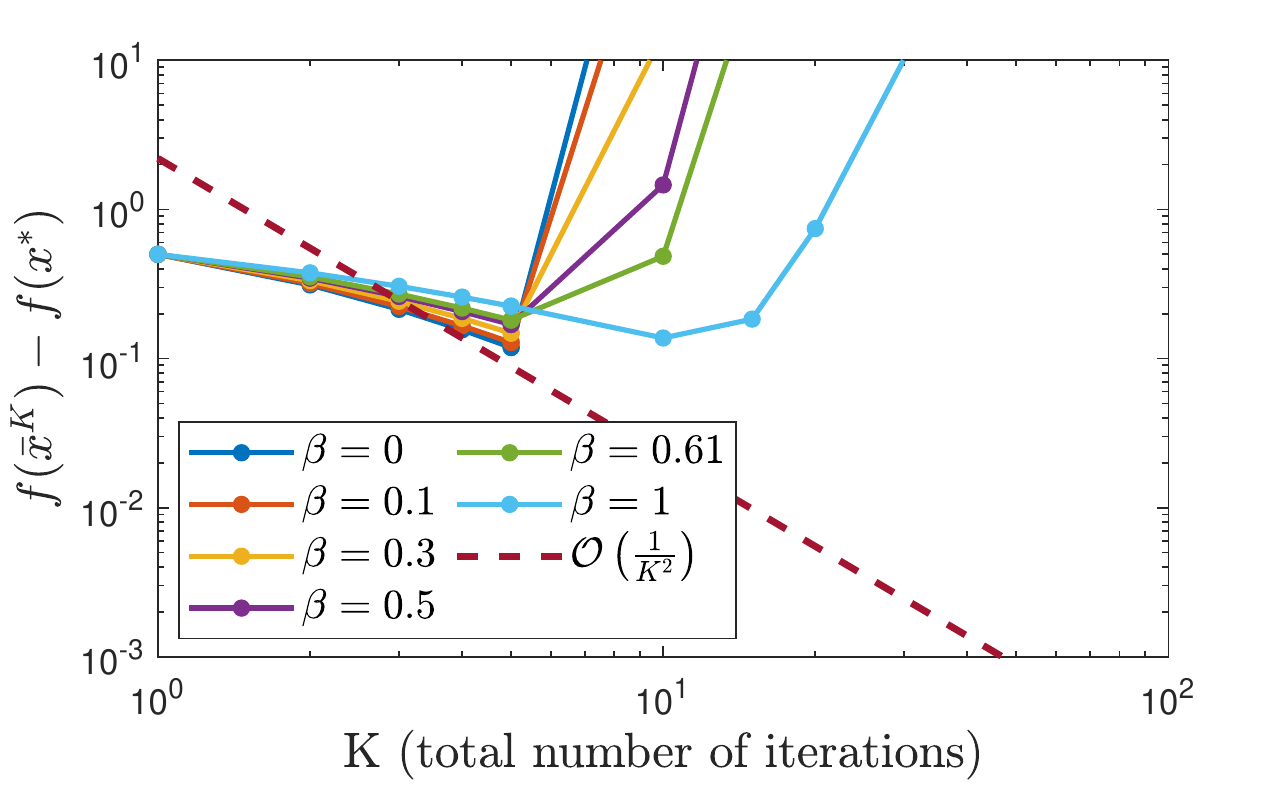}
        \caption{$\etaADNGD = 0.5$, $\lam = 0.75$}
        \label{fig:ADNGD_5_75}
    \end{subfigure}
\caption{Evolution with $K$ of the worst-case average functional error $f(\xb^K) - f(x^*)$ for Acc-DNGD, obtained with the spectral PEP formulation, with $N=2$, $ L=1$ and $k_0 = 1$. Different values of $\beta$, $\etaADNGD$ and $\lam$ are shown. Dashed lines show curves evolving in $\bigO(\frac{1}{K^2})$, which corresponds to the rate conjectured in \cite{AccDNGD} when $\beta = 0$. This asymptotic rate is reached for $\beta = 0$ only when $\etaADNGD$ is sufficiently small.}
\label{fig:ADNGD}
\end{figure*}

In this section, we use our spectral PEP formulation to analyze the Acc-DNGD algorithm, which allows having a better idea about the impact of the parameters on its performance and nuance the conjectures (i) and (ii) of \cite{AccDNGD}.
Fig. \ref{fig:ADNGD} shows the evolution of the worst-case average functional error $f(\xb^K) - f(x^*)$ with the total number of iterations $K$. We consider a symmetric range of eigenvalues $-\lm = \lp = \lam$, and thus we obtain spectral bounds valid over the entire class of matrices $\Wcl{-\lam}{\lam}$. The results are shown for different values of $\beta$, $\etaADNGD$ and $\lam$, while $L$, $N$ and $k_0$ are fixed.
We aim at testing the conjectures (i) and (ii).
To test (ii), about the validity range of $\beta$, conjectured to extend below 0.6, we choose $\beta = \{0,0.1,0.3,0.5\}$. To compare with proven valid values of $\beta$ and test conjecture (i), we also choose $\beta = \{0.61,1\}$.
We choose $\lam = \{0,0.75\}$ and $\eta = \{0.05, 0.5\}$ to have two representative values for each parameter (small and large).
We fix $L=1$ because the results for other values of $L$ can be recovered by scaling. We fix $N=2$ because the value of $N$ does not impact the worst-case value obtained with the spectral formulation, as for the two previous algorithms.
We fix $k_0 = 1$ for simplicity and because it does not affect the long term evolution of the performance. In particular, the value of $k_0$ has no impact when $\beta = 0$, see \eqref{eq:accDNGD-sz}.

We first observe that the value $\eta$ influences the performance of the algorithm. Indeed, when $\etaADNGD$ becomes larger (e.g., from Fig. \ref{fig:ADNGD_05_0}, \ref{fig:ADNGD_05_75} to Fig. \ref{fig:ADNGD_5_0}, \ref{fig:ADNGD_5_75}), the worst-case functional error decreases faster in a first phase but it sometimes explodes afterwards, preventing the algorithm to converge.
This occurs for example when $\beta$ is too small (e.g. $\beta = 0$ and $\beta = 0.1$ in Fig. \ref{fig:ADNGD_5_0}) or when $\lam$ is too large (e.g. in Fig. \ref{fig:ADNGD_5_75} and \ref{fig:ADNGD_05_75} where $\lam = 0.75$).
In Fig. \ref{fig:ADNGD_5_75}, this sharp increase even occurs in the experimental setting of \cite{AccDNGD}, i.e. $\etaADNGD = \frac{1}{2L} = 0.5$ and $\beta = 0.61$ (green line). These observations contradict conjecture (i) as currently stated.

Secondly, we observe in Fig. \ref{fig:ADNGD} that the curves for $\beta=0.5$ and $\beta=0.61$ behaves similarly in all the different plot. This suggests that the worst-case values present no phase transition at $\beta = 0.6$, and therefore, supports conjecture (ii) claiming that values of $\beta$ lower than 0.6 also provide rates $\bigO(\frac{1}{K^{2-\beta}})$, including the case $\beta = 0$.
The curves for $\beta = 0$ appear indeed to approach a decrease in $\bigO(\frac{1}{K^2})$, as conjectured in \cite{AccDNGD}, but only in Fig. \ref{fig:ADNGD_05_0} where the values $\etaADNGD$ and $\lam$ are sufficiently small.
Indeed, the rate $\bigO(\frac{1}{K^2})$ cannot be observed for larger values of $\etaADNGD$ or $\lam$ where PEP problems may become unbounded (see Figures \ref{fig:ADNGD_05_75}, \ref{fig:ADNGD_5_0} and \ref{fig:ADNGD_5_75}).
Therefore, the $\etaADNGD$ parameter should be tuned according to the values of $\lam$ and the choice of $\beta$. Qualitatively, we can see that it must decrease when $\lam$ increases or when $\beta$ decreases. Further analysis should be performed to tune the value of $\etaADNGD$ in general. Having sufficiently small step-sizes $\eta$ seems thus to be the key for conjecture (ii) to be true. Since smaller values for $\beta$ appear to require a smaller step-size, the performance would approach the convergence rate $\bigO(\frac{1}{K^{2-\beta}})$ more slowly in that case. However, even with well-tuned $\eta$, we cannot exclude that the worst-case performance of Acc-DNGD does not explode after a large number of iterations in some cases, as is the case in an early phase for some settings in Fig. \ref{fig:ADNGD}.

In addition, it is interesting to note that the worst matrix that is recovered from the spectral PEP formulation for Acc-DNGD is again $W^{(1)}$ from \eqref{eq:mat}, as it was the case for DGD and DIGing.
For Acc-DNGD also, the bounds obtained using the exact PEP formulation with this specific matrix $W^{(1)}$ \emph{exactly} match the corresponding spectral bounds, within numerical errors. This means that the spectral formulation, even though it is a relaxation, provides a \emph{tight performance bound} for Acc-DNGD with symmetric generalized doubly stochastic matrices.

\subsection{Code and Toolbox}
PEP problems have been written and solved using the PESTO Matlab toolbox \cite{PESTO}, with Mosek solver, within 200 seconds maximum. For example, for $N=3$, the time needed for a regular laptop to solve the spectral formulation for DGD (from Section \ref{sec:analysis_DGD}) is about 3, 12, 48, and 192 seconds respectively for $K=5, 10, 15, 20$.  The PESTO toolbox is available on \textsc{Github} (\url{https://github.com/PerformanceEstimation/Performance-Estimation-Toolbox}).
We have updated PESTO to allow easy and intuitive PEP formulation of gradient-based decentralized optimization methods, and we have added a code example for DGD, DIGing, and Acc-DNGD.

\section{Conclusion}
We have developed a methodology that automatically computes numerical worst-case performance bounds for any decentralized optimization method that combines first-order oracles with average consensus steps. This opens the way for computer-aided analysis of many other decentralized algorithms, which could lead to improvements in their performance guarantees and parameter tuning, and could allow rapid exploration of new algorithms. Moreover, the guarantees computed with our tool appear tight in many cases.

Our methodology is based on the performance estimation problem (PEP) for which we have developed two representations of average consensus steps.
Our first formulation provides the exact worst-case performance of the method for a specific network matrix.
The second formulation provides upper performance bounds that are valid for an entire spectral class of matrices. This spectral formulation often allows recovering the worst possible network matrix based on the PEP solutions.
We demonstrate the use of our automatic performance methodology on three algorithms and we discover, among other things, that DGD worst-case performance is much better than the theoretical guarantee; DIGing has performance independent of the number of agents and accepts much larger step-size than predicted by the theory; Acc-DNGD appears to present a rate $\bigO(\frac{1}{K^{2-\beta}})$, for $\beta \in \qty(0,2)$, only if the step-sizes are sufficiently small.

Further developments of our methodology may include a spectral formulation that is independent of the number of agents \cite{PEP_dec_Nindep} or that considers other classes of network matrices, e.g. non symmetric or B-connected networks.

\appendices
\section*{Acknowledgment}
The authors wish to thank Adrien Taylor for his helpful advice concerning the PESTO toolbox.

\section{Note on scaling of DGD}
\label{annexe:scaling}
In the DGD analysis, we have one constant $R$ to bound the subgradients
$\|\nabla f_i(x_i^k)\|^2 \le R^2$ (for $k = 0,\dots,K$), and another one $D$ to bound the initial distance to the optimum $\|x^0 - x^*\|^2 \le D^2$.
In our performance estimation problem, we consider general positive values for these parameters $D > 0$ and $R > 0$ and we parametrize the step-size by $\alpha = \frac{Dh}{R\sqrt{K}}$, for some $h > 0$. To pass from this general problem to the specific case where $D=1$ and $R=1$, that we actually solve, we consider the following changes of variables:
$$\tilde{x}_i = \frac{x_i}{D}, \quad \tilde{f}_i(\tilde{x}_i) = \frac{1}{DR} f_i(x_i) \quad \text{ and } \quad \tilde{\alpha} = \frac{R \alpha}{D}.$$
These changes of variables do not alter the updates of the algorithm or the nature of the problem. They allow expressing the worst-case guarantee obtained for $f(\xmoy ) - f(x^*)$ with general values of $D$, $R$, and $h$, denoted $w(D,R,h)$, in terms of the worst-case guarantee obtained for $\tilde{f}(\tilde{x}_{\mathrm{av}}) - \tilde{f}(\tilde{x}^*)$ with $D=R=1$, denoted $\tilde{w}(1,1,h)$:
\begin{equation} \label{eq:scal_wc}
  w(D,R,h) = DR~ \tilde{w}(1,1,h).
\end{equation}

The same kind of scaling can be applied to Theorem \ref{thm:bound}. The theorem is valid for general values of $D$ and $R$ but is specific to $\alpha = \frac{1}{\sqrt{K}}$, which is equivalent to picking $h = \frac{R}{D}$. After the scaling, we obtain the following bound, valid for $D = 1$, $R = 1$ and any value of $\alpha = \frac{h}{\sqrt{K}}$ with $h>0$:
\begin{equation} \label{eq:th_bound_scal}
  \tilde{f}(\tilde{x}_{\mathrm{av}} ) - \tilde{f}(\tilde{x}^*) \le \frac{h^{-1} + h}{2 \sqrt{K}} + \frac{2 h}{\sqrt{K}(1-\lam)}.
\end{equation}
This scaled theoretical bound with $h=1$ is equivalent to the bound from Theorem \ref{thm:bound} with $D=R=1$, which was the focus of the numerical analysis in Section \ref{sec:NumRes}. \\
This bound \eqref{eq:th_bound_scal} can be extended to any value of $D > 0$ and $R > 0$, using the relation from equation \eqref{eq:scal_wc}:
\begin{equation} \label{eq:th_bound_scal_2}
  f(\xmoy ) - f(x^*) \le DR \qty(\frac{h^{-1} + h}{2 \sqrt{K}} + \frac{2 h}{\sqrt{K}(1-\lam)}).
\end{equation}

\bibliographystyle{IEEEtran}
\bibliography{refs}
\end{document}